\newtheorem*{cor}{Corollary}
\newtheorem*{lem}{Lemma}
\newtheorem*{prop}{Proposition}
\theoremstyle{definition}
\theoremstyle{definition}
\newtheorem*{thm}{Theorem}
\newtheorem*{rems}{Remarks}
\newcounter{cnt}
\def\mydggeometry{\makeatletter\dg@YGRID=1\dg@XGRID=20\unitlength=0.003pt\makeatother}
\makeatother \theoremstyle{remark}
\numberwithin{equation}{section}
  \DeclareMathOperator{\ad}{ad}
\let\bwdg\bigwedge
\def\bigwedge{{\textstyle\bwdg}}
\begin{document}

\newcommand{\thmref}[1]{Theorem~\ref{#1}}
\newcommand{\secref}[1]{Section~\ref{#1}}
\newcommand{\lemref}[1]{Lemma~\ref{#1}}
\newcommand{\propref}[1]{Proposition~\ref{#1}}
\newcommand{\corref}[1]{Corollary~\ref{#1}}
\newcommand{\remref}[1]{Remark~\ref{#1}}
\newcommand{\defref}[1]{Definition~\ref{#1}}
\newcommand{\er}[1]{(\ref{#1})}
\newcommand{\id}{\operatorname{id}}
\newcommand{\ord}{\operatorname{\emph{ord}}}
\newcommand{\sgn}{\operatorname{sgn}}
\newcommand{\wt}{\operatorname{wt}}
\newcommand{\tensor}{\otimes}
\newcommand{\from}{\leftarrow}
\newcommand{\nc}{\newcommand}
\newcommand{\rnc}{\renewcommand}
\newcommand{\dist}{\operatorname{dist}}
\newcommand{\qbinom}[2]{\genfrac[]{0pt}0{#1}{#2}}
\nc{\cal}{\mathcal} \nc{\goth}{\mathfrak} \rnc{\bold}{\mathbf}
\renewcommand{\frak}{\mathfrak}
\newcommand{\supp}{\operatorname{supp}}
\newcommand{\Irr}{\operatorname{Irr}}
\renewcommand{\Bbb}{\mathbb}
\nc\bomega{{\mbox{\boldmath $\omega$}}}
\nc\bpsi{{\mbox{\boldmath $\Psi$}}}
 \nc\balpha{{\mbox{\boldmath $\alpha$}}}
 \nc\bpi{{\mbox{\boldmath $\pi$}}}
\nc\bsigma{{\mbox{\boldmath $\sigma$}}}
\nc\bcN{{\mbox{\boldmath $\cal{N}$}}} \nc\bcm{{\mbox{\boldmath
$\cal{M}$}}} \nc\bLambda{{\mbox{\boldmath $\Lambda$}}}

\newcommand{\lie}[1]{\mathfrak{#1}}
\makeatletter
\def\section{\def\@secnumfont{\mdseries}\@startsection{section}{1}%
  \z@{.7\linespacing\@plus\linespacing}{.5\linespacing}%
  {\normalfont\scshape\centering}}
\def\subsection{\def\@secnumfont{\bfseries}\@startsection{subsection}{2}%
  {\parindent}{.5\linespacing\@plus.7\linespacing}{-.5em}%
  {\normalfont\bfseries}}
\makeatother
\def\subl#1{\subsection{}\label{#1}}
 \nc{\Hom}{\operatorname{Hom}}
  \nc{\mode}{\operatorname{mod}}
\nc{\End}{\operatorname{End}} \nc{\wh}[1]{\widehat{#1}}
\nc{\Ext}{\operatorname{Ext}} \nc{\ch}{\text{ch}}
\nc{\ev}{\operatorname{ev}} \nc{\Ob}{\operatorname{Ob}}
\nc{\soc}{\operatorname{soc}} \nc{\rad}{\operatorname{rad}}
\nc{\head}{\operatorname{head}}
\def\Im{\operatorname{Im}}
\def\gr{\operatorname{gr}}
\def\mult{\operatorname{mult}}
\def\Max{\operatorname{Max}}
\def\ann{\operatorname{Ann}}
\def\sym{\operatorname{sym}}
\def\Res{\operatorname{\br^\lambda_A}}
\def\und{\underline}
\def\Lietg{$A_k(\lie{g})(\bsigma,r)$}

 \nc{\Cal}{\cal} \nc{\Xp}[1]{X^+(#1)} \nc{\Xm}[1]{X^-(#1)}
\nc{\on}{\operatorname} \nc{\Z}{{\bold Z}} \nc{\J}{{\cal J}}
\nc{\C}{{\bold C}} \nc{\Q}{{\bold Q}}
\renewcommand{\P}{{\cal P}}
\nc{\N}{{\Bbb N}} \nc\boa{\bold a} \nc\bob{\bold b}
\nc\boc{\bold c} \nc\bod{\bold d} \nc\boe{\bold e}
\nc\bof{\bold f} \nc\bog{\bold g} \nc\boh{\bold h}
\nc\boi{\bold i} \nc\boj{\bold j} \nc\bok{\bold k}
\nc\bol{\bold l} \nc\bom{\bold m} \nc\bon{\bold n}
\nc\boo{\bold o} \nc\bop{\bold p} \nc\boq{\bold q}
\nc\bor{\bold r} \nc\bos{\bold s} \nc\boT{\bold t}
\nc\boF{\bold F} \nc\bou{\bold u} \nc\bov{\bold v}
\nc\bow{\bold w} \nc\boz{\bold z} \nc\boy{\bold y} \nc\ba{\bold
A} \nc\bb{\bold B} \nc\bc{\bold C} \nc\bd{\bold D} \nc\be{\bold
E} \nc\bg{\bold G} \nc\bh{\bold H} \nc\bi{\bold I} \nc\bj{\bold
J} \nc\bk{\bold K} \nc\bl{\bold L} \nc\bm{\bold M} \nc\bn{\bold
N} \nc\bo{\bold O} \nc\bp{\bold P} \nc\bq{\bold Q} \nc\br{\bold
R} \nc\bs{\bold S} \nc\bt{\bold T} \nc\bu{\bold U} \nc\bv{\bold
V} \nc\bw{\bold W} \nc\bz{\bold Z} \nc\bx{\bold x}
\nc\KR{\bold{KR}} \nc\rk{\bold{rk}} \nc\het{\text{ht }}

\nc\toa{\tilde a} \nc\tob{\tilde b} \nc\toc{\tilde c}
\nc\tod{\tilde d} \nc\toe{\tilde e} \nc\tof{\tilde f}
\nc\tog{\tilde g} \nc\toh{\tilde h} \nc\toi{\tilde i}
\nc\toj{\tilde j} \nc\tok{\tilde k} \nc\tol{\tilde l}
\nc\tom{\tilde m} \nc\ton{\tilde n} \nc\too{\tilde o}
\nc\toq{\tilde q} \nc\tor{\tilde r} \nc\tos{\tilde s}
\nc\toT{\tilde t} \nc\tou{\tilde u} \nc\tov{\tilde v}
\nc\tow{\tilde w} \nc\toz{\tilde z} \nc\bbc{\mathbb{C}}
\nc\bbn{\mathbb{N}} \nc\bbz{\mathbb{Z}} \nc\bbf{\mathbb{F}}
\nc\bbd{\mathbb{D}} \nc\f{\mathcal{F}} \nc\cs{\mathcal{S}}
\nc\cp{\mathcal{P}} \def\comp{\textnormal{comp}}
\def\span{\textnormal{span }} \nc\m{\mathcal{M}}

\title{Integral Bases for the Universal Enveloping Algebras of Map Algebras}

\author{Samuel Chamberlin}
\address{Department of Computer Science, Information Systems, and Mathematics, Park University, Parkville,
MO 64152, USA} \email{samuel.chamberlin@park.edu}
\begin{abstract}
Given a finite-dimensional, simple Lie algebra $\lie g$ over
$\bbc$ and $A$, a commutative, associative algebra with unity
over $\bbc$, we exhibit an integral form for the universal
enveloping algebra of the map algebra, $\lie g\otimes A$, and
an explicit $\bbz$-basis for this integral form. We also
produce explicit commutation formulas in the universal
enveloping algebra of $\lie{sl}_2\otimes A$ that allow us to
write certain elements in Poincar\'{e}-Birkhoff-Witt order.
\end{abstract}
\maketitle
\section{Introduction}

\noindent Let $\bbz$ denote the integers. If $A$ is an algebra,
over a field $\bbf$ of characteristic 0, define an
\underline{integral form} $A_\bbz$ of $A$ to be a
$\bbz$-algebra such that $A_\bbz\otimes_\bbz\bbf=A$. An
\underline{integral basis} for $A$ is a $\bbz$-basis for
$A_\bbz$.

The theory of integral forms for finite-dimensional simple Lie
algebras was first studied by Chevalley in 1955. His work led
to the construction of Chevalley groups (of adjoint type). The
representation theory of Chevalley groups relies on the
existence of integral forms for the universal enveloping
algebras associated to these simple finite-dimensional Lie
algebras. In 1966, suitable integral forms were discovered by
Cartier and Kostant independently. They obtained precise
information about these integral forms through integral bases
(bases whose $\bbz$-span is the integral form). The
construction of such bases relies heavily on straightening
identities in the universal enveloping algebra, which allow one
to write certain elements in Poincar\'{e}-Birkhoff-Witt (PBW)
order. Cartier and Kostant's $\bbz$-form led to the
construction of Lie groups and Lie algebras over a field of
positive characteristic, generalizing Chevalley's construction.
This in turn led to the development of representation theory
over a field of positive characteristic, \cite{H}.

Also in 1966, Serre showed that a finite dimensional Lie
algebra can be presented by generators and relations determined
solely by the Cartan matrix. With a generalized Cartan matrix
one can use the Serre presentation to define the class of
Kac-Moody Lie algebras. The most widely studied subclass of
these algebras, the simple affine Lie algebras, have structure
and representation theories similar to those for simple finite
dimensional Lie algebras. The best way to understand untwisted
simple affine Lie algebras is as central extensions of loop
algebras, \cite{C}. For these affine Lie algebras Garland, in
1978, extended the theory of integral forms started by
Chevalley and continued by Cartier and Kostant. Garland also
gave explicit constructions of $\bbz$-bases for these integral
forms via a Chevalley-type basis for the affine Lie algebra.
The complexity in formulating integral bases and straightening
identities increased greatly in the affine case. These results
were then extended to all simple affine Lie algebras by
Mitzman, in 1983. In 2007, Jakeli\'{c} and Moura used Garland
and Mitzman's work on integral forms to study representations
of affine Lie algebras over a field of positive characteristic,
\cite{JM}.

Recently there has been much interest in map algebras and their
representations, \cite{CFK}, \cite{NSS}. A map algebra is a Lie
algebra $\lie g\otimes_\bbc A$, where $\lie g$ is any
finite-dimensional simple complex Lie algebra and $A$ is any
commutative associative complex algebra. The Lie bracket is
given by
$$[z\otimes a, z'\otimes b]=[z,z']\otimes ab,\ z,z'\in\lie a,\ a,b\in A$$
These Lie algebras are so named because if $X$ is an algebraic
variety and $A$ is its coordinate ring then $\lie g\otimes A$
can also be realized as the Lie algebra of regular maps
$X\to\lie g$ with pointwise Lie bracket.

Map algebras are a generalization of the loop algebras, for
which $A$ is the Laurent polynomials. Therefore it is natural
to wish to generalize the Garland's work to the map algebras.
We formulate and prove straightening identities in the
universal enveloping algebra of $\lie{sl}_2\otimes A$. The
notational difficulties increase greatly when one moves to the
general case. Additionally the formula we have proved is much
more general than the one proved by Garland in \cite{G}.

If $A$ has a $\bbc$-basis which is closed under multiplication,
these straightening identities lead to the construction of an
integral form and integral basis for the universal enveloping
algebra of $\lie g\otimes A$.

A natural offshoot of this work will be to study
representations for map algebras over a field of positive
characteristic.

\section{Preliminaries}

The following notation will be used throughout this manuscript:
$\bbc$ is the set of complex numbers and $\bbz_{\geq0}$ is the
set of non-negative integers. Given any Lie algebra $\lie a$,
$\bu(\lie a)$ is the universal enveloping algebra of $\lie a$.

Let $\lie g$ be a finite-dimensional, complex simple Lie
algebra of rank $n$ where $I=\{1,\ldots,n\}$. Fix a Cartan
subalgebra $\lie h$ of $\lie g$ and let $R$ denote the
corresponding set of roots. Let $\{\alpha_i\}_{i\in I}$ be a
set of simple roots and $Q$ (respectively $Q^+)$, be the
integer span (respectively $\bbz_{\geq0}$-span) of the simple
roots. Set $R^+=R\cap Q^+$ and fix an order on
$R^+=\{\beta_1,\ldots,\beta_m\}$.

Let $\{x_\alpha^{\pm},h_i:\alpha\in R^+,\ i\in I\}$ be a
Chevalley basis of $\lie g$ and set
$x_i^\pm=x_{\alpha_i}^{\pm}$, and
$h_\alpha=[x_\alpha^+,x_\alpha^-]$. Note that
$h_i=h_{\alpha_i}$. For each $\alpha\in R^+$, the subalgebra of
$\lie g$ spanned by $\{x_{\alpha}^\pm,h_\alpha\}$ is isomorphic
to $\lie{sl}_2$ (When $\lie g=\lie{sl}_2$ we write the
$\bbc$-basis as $\{x^-,h,x^+\}$). Set
$$\lie n^\pm=\bigoplus_{\alpha\in R^+}\bbc x_\alpha^\pm,$$
and note that $\lie g=\lie n^-\oplus \lie h\oplus \lie n^+.$

By the Poincare Birkhoff Witt theorem, we know that if $\lie b$
and $\lie c$ are Lie subalgebras of $\lie a$ such that $\lie
a=\lie b\oplus\lie c$ as vector spaces then
$$\bu(\lie a)\cong\bu(\lie b)\otimes\bu(\lie c)$$ as vector
spaces. So in particular we have a vector space isomorphism
$$\bu(\lie g)\cong\bu(\lie n^-)\otimes\bu(\lie
h)\otimes\bu(\lie n^+)$$

Given any $u\in\bu(\lie a)$ and $r\in\bbz_{\geq0}$ define
$u^{(r)}=\frac{u^r}{r!}$ and
$$\binom{u}{r}=\frac{u(u-1)\cdots(u-r+1)}{r!}.$$

Define $T^0(\lie a)=\bbc$, and for all $j\geq1$, define
$T^j(\lie a)=\lie a^{\otimes j}$, $T(\lie
a)=\bigoplus_{j=0}^\infty T^j(\lie a)$, and $T_j(\lie
a)=\bigoplus_{k=0}^jT^k(\lie a)$. Then set $\bu_j(\lie a)$ to
be the image of $T_j(\lie a))$ under the canonical surjection
$T(\lie a)\to\bu(\lie a)$. Then for any $u\in\bu(\lie a)$
\emph{define the degree of $u$} by
$$\deg u=\min_{j}\{u\in\bu_j(\lie a)\}$$

\subsection{Map Algebras}

Fix $A$ a commutative associative algebra with unity over
$\bbc$. Let $\lie a$ be a Lie algebra, over $\bbc$, with Lie
bracket $[\ ,\ ]_{\lie a}$. The \emph{map algebra} of $\lie a$
is the vector space $\lie a\otimes A$, with Lie bracket given
by
$$[z\otimes a, z'\otimes b]=[z,z']_{\lie a}\otimes ab,\ z,z'\in\lie a,\ a,b\in A.$$
$\lie a$ can be embedded in this Lie algebra as $\lie a\otimes
1$. In the case that $A$ is the coordinate ring of some
algebraic variety $X$ $\lie{a}\otimes A$ is isomorphic to the
Lie algebra of regular maps $X\to\lie a$.

Note that by the PBW Theorem we have a vector space isomorphism
$$\bu(\lie g\otimes A)\cong\bu(\lie n^+\otimes
A)\otimes\bu(\lie h\otimes A)\otimes\bu(\lie n^-\otimes A)$$

For each $\alpha\in R^+$, let
$\Omega_\alpha:\bu(\lie{sl}_2\otimes A)\to\bu(\lie g\otimes A)$
be the algebra homomorphism defined by
$$x^\pm\otimes a\mapsto x^\pm_\alpha\otimes a\hskip.3in h\otimes a\mapsto h_\alpha\otimes a$$

\subsection{Multisets}

Given any set $S$ define a \emph{multiset of elements of $S$}
to be a multiplicity function $\chi:S\to\bbz_{\geq0}$. Define
$\f(S)=\{\chi:S\to\bbz_{\geq0}|\supp\chi<\infty\}$. For
$\chi\in\f(S)$ define $|\chi|=\sum_{s\in S}\chi(s)$. Notice
that $\f(S)$ is an abelian monoid under function addition.
Define a partial order on $\f(S)$ so that for
$\psi,\chi\in\f(S)$, $\psi\leq\chi$ if $\psi(s)\leq\chi(s)$ for
all $s\in S$. Define
$$\f=\{\chi:A\to\bbz_{\geq0}|\supp\chi<\infty\},\hskip.3in
\f_k=\{\chi\in\f:|\chi|=k\}$$ and given $\chi\in\f$ define
$$\f(\chi)=\{\psi\in\f:\psi\leq\chi\},\hskip.3in
\f_k(\chi)=\{\psi\in\f(\chi):|\psi|=k\}$$ If $\psi\in\f(\chi)$
we define $\chi-\psi$ by standard function subtraction. Also
define functions $\pi:\f\to A$, $\m:\f\to\bbz_{>0}$ and, for
all $\alpha\in R^+$, $x^\pm_\alpha:\f\to\bu(\lie{g}\otimes A)$
by
$$\pi(\psi)=\prod_{a\in A}a^{\psi(a)}\hskip.5in \m(\psi)=\frac{|\psi|!}{\prod\psi(a)!}
\hskip.5in x^\pm_\alpha(\psi)=\prod_{a\in A}(x_\alpha^\pm\otimes a)^{(\psi(a))}$$

\section{An Integral Form and Integral Basis}

In section we will define our integral form and give our
integral basis for the map algebra, $\lie g\otimes A$.

\subsection{Definition of $p(\varphi,\chi)$}

Given $\varphi,\chi\in\f$, Recursively define functions
$p:\f^2\to\bu(h\otimes A)\subset\bu(\lie{sl}_2\otimes A)$ by
$p(0,0)=1$ and for $\varphi,\chi\in\f-\{0\}$,
\begin{eqnarray*}
p(\varphi,\chi)&=&-\frac{\delta_{|\varphi|,|\chi|}}{|\varphi|}
\sum_{\substack{\psi_1\in\f(\varphi)-\{0\}\\ \psi_2\in\f(\chi)-\{0\}}}
\m(\psi_1)\m(\psi_2)\left(h\otimes \pi(\psi_1)\pi(\psi_2)\right)p(\varphi-\psi_1,\chi-\psi_2)
\end{eqnarray*}

Define $p(\chi)=p(\chi,|\chi|\chi_1)$ and, for all $\alpha\in
R^+$, $p_\alpha(\chi)=\Omega_{\alpha}(p(\chi))\in\bu(\lie
h\otimes A)\subset\bu(\lie g\otimes A)$.

\begin{rems}
$(a)$ $p(\varphi,\chi)=0$ if $|\varphi|\neq|\chi|$.

$(b)$ The $p(\varphi,\chi)$ are a generalization of Garland's
$\Lambda_k(H_\alpha(r))$ because
$p\left(k\chi_{t^r}\right)=\Lambda_k(H_\alpha(r))$, where
$\chi_a$ is the characteristic function on $a\in A$, \cite{G}.
\end{rems}

\subsection{An Integral Form and Integral Basis}

If $A$ has a $\bbc$-basis, $\bb$, which is closed under
multiplication define $\bu_\bbz(\lie g\otimes A)$, to be the
$\bbz$-subalgebra of $\bu(\lie g\otimes A)$ generated by
$$\left\{\left(x_\alpha^\pm\otimes b\right)^{(r)}:\alpha\in R^+,\ b\in\bb,\ r\in\bbz_{\geq0}\right\}$$

Fix an order on $R^+=\{\beta_1,\ldots,\beta_m\}$. Then define
functions $f^\pm:\f(\bb)^{\times m}\to\bu(\lie g\otimes A)$ by
$$(\psi_1,\ldots,\psi_m)\mapsto x_{\beta_1}^\pm(\psi_1)x_{\beta_2}^\pm(\psi_2)\cdots x_{\beta_m}^\pm(\psi_m)$$
and $f^0:\f(\bb)^{\times n}\to\bu(\lie h\otimes A)$ by
$$(\psi_1,\ldots,\psi_n)\mapsto p_1(\psi_1)p_2(\psi_2)\cdots p_n(\psi_n)$$

Define
$\mathcal{B}=\left\{f^-(\und{\psi})f^0(\und{\chi})f^+(\und{\psi}')|\und{\psi},\und{\psi'}\in\f(\bb)^{\times
m},\ \und{\chi}\in\f(\bb)^{\times n}\right\}$ Define
$\mathcal{B}_\pm$ to be the set consisting of all
$f^\pm(\und{\psi})$, and $\mathcal{B}_0$ to be the set
consisting of all $f^0(\und{\chi})$.

\begin{thm}\label{thm}
$\mathcal{B}$ is a $\bbz$-basis for $\bu_\bbz(\lie g\otimes
A)$.
\end{thm}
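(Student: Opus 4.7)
The plan is to establish three things in sequence: first that each element of $\mathcal{B}$ actually lies in $\bu_\bbz(\lie g\otimes A)$; second, that $\mathcal{B}$ is linearly independent; and third, that $\mathcal{B}$ spans $\bu_\bbz(\lie g\otimes A)$ as a $\bbz$-module. The containment $\mathcal{B}\subset\bu_\bbz(\lie g\otimes A)$ is nontrivial because, although $f^\pm(\underline{\psi})$ is a product of the declared generators, the elements $f^0(\underline{\chi})$ built from $p_\alpha(\chi)$ are defined by a recursion involving division by $|\varphi|$ and therefore have no obvious integrality. I would prove their integrality by applying $\Omega_\alpha$ to a Garland-type straightening identity in $\bu(\lie{sl}_2\otimes A)$ of the form
\[
(x^+\otimes a)^{(s)}(x^-\otimes b)^{(s)}\equiv p(s\chi_a,s\chi_b)+\text{(terms with root vectors)}\pmod{\bu_\bbz\text{-combinations of PBW monomials}},
\]
which is exactly the identity that motivates the recursive definition of $p(\varphi,\chi)$. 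Such an identity is the map-algebra analogue of Garland's classical computation, and is presumably the main straightening formula proved earlier in the paper; once in hand, it immediately gives $p_\alpha(\chi)\in\bu_\bbz(\lie g\otimes A)$ by induction on $|\chi|$, and hence $f^0(\underline\chi)\in\bu_\bbz(\lie g\otimes A)$.

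For linear independence, I would use the PBW vector-space isomorphism
\[
\bu(\lie g\otimes A)\cong \bu(\lie n^-\otimes A)\otimes\bu(\lie h\otimes A)\otimes\bu(\lie n^+\otimes A)
\]
to reduce the problem to the three tensor factors. In the outer factors, the $f^\pm(\underline\psi)$ are products of divided powers of root vectors listed in a fixed order, so they coincide with (positive rational multiples of) the PBW monomials in $\bu(\lie n^\pm\otimes A)$ and are therefore $\bbc$-linearly independent. For the middle factor, I would analyze the leading term of $p_\alpha(\chi)$ with respect to the standard filtration $\bu_j(\lie h\otimes A)$: the recursion shows that $p_\alpha(\chi)$ is a polynomial in elements $h_\alpha\otimes\pi(\psi)$, and a straightforward induction on $|\chi|$ identifies its top-degree part as (up to sign) a symmetric-product expression in the $h_\alpha\otimes a$'s indexed by $\chi$. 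Since these top-degree parts are linearly independent in the symmetric algebra $S(\lie h\otimes A)\cong\gr\bu(\lie h\otimes A)$, the products $f^0(\underline\chi)$ are independent.

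The spanning statement is the main obstacle. The strategy is induction: filter $\bu_\bbz(\lie g\otimes A)$ by total degree in the generators $(x_\alpha^\pm\otimes b)^{(r)}$ (counted with weight $r$) and show that any monomial in the generators can be rewritten modulo lower-filtration terms as a $\bbz$-linear combination of elements of $\mathcal{B}$. Three straightening moves are needed: (i) interchanging two divided powers of the same root $(x_\alpha^\pm\otimes b)^{(r)}(x_\alpha^\pm\otimes b')^{(s)}$, which is handled by the standard divided-power identity in a commutative situation after reducing to two commuting generators (since $[\lie n^\pm,\lie n^\pm]$ introduces only lower-filtration corrections); (ii) moving all $x^+$ factors past $x^-$ factors, which produces mixed terms living in $\bu_\bbz(\lie h\otimes A)$ together with strictly lower-filtration pieces; and (iii) expressing those resulting Cartan elements as $\bbz$-linear combinations of products $f^0(\underline\chi)$. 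Step (iii) is where the definition of $p(\varphi,\chi)$ is essential: the recursion is precisely arranged so that the inverse change of basis (writing any monomial in $h_\alpha\otimes a$'s in terms of the $p_\alpha(\chi)$'s) has \emph{integer} coefficients, and this must be verified by induction on $|\chi|$ using the same straightening identity that established $p_\alpha(\chi)\in\bu_\bbz(\lie g\otimes A)$.

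The key point where I expect the argument to require the most care is matching step (ii) with step (iii) simultaneously: the straightening of $(x_\alpha^+\otimes a)^{(s)}(x_\alpha^-\otimes b)^{(s)}$ must yield exactly the $p_\alpha(\chi)$'s (not arbitrary rational combinations of $h$-monomials), and the off-diagonal $(\alpha\neq\beta)$ interactions between $x_\alpha^+$ and $x_\beta^-$ must be controlled so that only lower-filtration Cartan terms appear. Reducing the off-diagonal case to the diagonal one via the $\lie{sl}_2$-copies indexed by roots, and then invoking $\Omega_\alpha$ together with the explicit formula for $p(\varphi,\chi)$, should close the induction and complete the proof.
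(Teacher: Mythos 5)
Your overall architecture matches the paper's: show $\mathcal{B}\subset\bu_\bbz(\lie g\otimes A)$, get linear independence from the PBW theorem plus leading-term analysis (Proposition \ref{degq}), and obtain spanning from a triangular decomposition $\bu_\bbz(\lie g\otimes A)=\bu_\bbz^-(\lie g\otimes A)\bu_\bbz^0(\lie g\otimes A)\bu_\bbz^+(\lie g\otimes A)$ driven by straightening identities. However, there are two genuine gaps. The first is in the containment step. The identity you invoke, for $(x^+\otimes a)^{(s)}(x^-\otimes b)^{(s)}$, only ever produces $p(s\chi_a,s\chi_b)=p(s\chi_{ab})$, i.e.\ the elements $p(\chi)$ whose multiset $\chi$ is concentrated at a single element of $A$ --- exactly Garland's case. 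The basis $\mathcal{B}_0$ involves $p_i(\chi)$ for arbitrary $\chi\in\f(\bb)$, for instance $\chi=\chi_a+\chi_b$ with $a\neq b$, and integrality of these requires straightening the general product $x^+(\varphi)x^-(\chi)$ over arbitrary finitely supported multisets (the paper's Lemma \ref{basic}, whose right-hand side is built from the operators $D^\pm$ and $\bbd$). Moreover the integrality cannot be extracted by a simple induction on $|\chi|$: the paper needs a simultaneous induction (the scheme $(ii)^{k}\Rightarrow(i)^{k}\Rightarrow(ii)^{k+1}$ of Corollary \ref{qinuz}) proving integrality of the $D^\pm_\alpha(\varphi,\chi,\psi)$ and of the $p_\alpha(\varphi,\chi)$ together, since each appears in the straightened expression for the other. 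Your single-element identity is not general enough to start this induction, and "immediately gives $p_\alpha(\chi)\in\bu_\bbz(\lie g\otimes A)$" does not hold as stated.

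The second gap is in your step (iii). You assert that the inverse change of basis from monomials in the $h_\alpha\otimes a$ to the $p_\alpha(\chi)$ has integer coefficients and that this follows from "the same straightening identity." What is actually required to close the spanning induction for the Cartan part is that a product of generators $p_i(\chi)p_i(\chi')$ lies in the $\bbz$-span of the single products $f^0(\und\chi)$, and this is not a formal consequence of the straightening identity: the paper (following Garland's Lemma 9.2) proves it by a separate mechanism, namely that $\ad C$ for $C\in\bu_\bbz(\lie g\otimes A)$ preserves the lattice $(\lie g\otimes A)_\bbz$ and its tensor powers, from which the integrality of the reduction $p(\chi)p(\chi')-\prod_{a}\binom{(\chi+\chi')(a)}{\chi(a)}p(\chi+\chi')\in\bbz\textnormal{-}\span\mathcal{B}_0$ is extracted. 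Without this (or a substitute) your induction does not close. A smaller but real omission of the same kind: for $\mathcal{B}_\pm$ you need the correction terms in $\left[(x_\alpha^\pm\otimes a)^{(r)},(x_\beta^\pm\otimes b)^{(s)}\right]$ to be \emph{integral} combinations of ordered divided-power monomials, which is the content of the explicit rank-two ($A_2$, $B_2$, $G_2$) computations of Lemma \ref{pmbasislem}, not merely the observation that they have lower filtration degree.
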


Proposition \ref{degq} and the Poincar\'{e}-Birkhoff-Witt
theorem easily give the following corollary to this theorem.
\begin{cor}
$\bu_\bbz(\lie g\otimes A)$ is an integral form for $\bu(\lie
g\otimes A)$.
\end{cor}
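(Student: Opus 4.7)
The plan is to prove $\bbz$-linear independence and $\bbz$-spanning of $\mathcal{B}$ separately, with a preliminary step verifying that each $f^0(\und{\chi})$ actually lies in $\bu_\bbz(\lie g\otimes A)$ (the elements $f^\pm(\und{\psi})$ are in $\bu_\bbz(\lie g\otimes A)$ by construction). The integrality of each $p_\alpha(\chi)$ is the present paper's analogue of Garland's integrality result for $\Lambda_k(H_\alpha(r))$, and should follow from the $\bu_\bbz(\lie{sl}_2\otimes A)$ straightening identities promised in the introduction, transported to $\bu_\bbz(\lie g\otimes A)$ via $\Omega_\alpha$.

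For $\bbz$-linear independence I would pass to $\bbc$: since $\bu_\bbz(\lie g\otimes A)\subset\bu(\lie g\otimes A)$, $\bbz$-linear independence is implied by $\bbc$-linear independence. Using the PBW vector space decomposition $\bu(\lie g\otimes A)\cong\bu(\lie n^-\otimes A)\otimes\bu(\lie h\otimes A)\otimes\bu(\lie n^+\otimes A)$, the question reduces to three independent statements. For $\mathcal{B}_\pm$, the hypothesis that $\bb$ is a $\bbc$-basis of $A$ together with PBW for the nilpotent algebra $\lie n^\pm\otimes A$ (and the fixed order on $R^+$) shows that $f^\pm(\und{\psi})$ runs over a $\bbc$-basis of $\bu(\lie n^\pm\otimes A)$. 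For $\mathcal{B}_0$, unpacking the recursion defining $p(\varphi,\chi)$ shows that $p_\alpha(\chi)$ is a polynomial in $\bu(\lie h\otimes A)\cong S(\lie h\otimes A)$ of total degree $|\chi|$ whose leading term is a nonzero scalar multiple of a distinguished monomial in the $h_\alpha\otimes a$; upper-triangularity against the standard PBW monomial basis then gives $\bbc$-linear independence of the $f^0(\und{\chi})$.

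The spanning statement is the main obstacle. The strategy is to induct on the length of a word in the generators $(x_\alpha^\pm\otimes b)^{(r)}$, with a secondary induction on an ``out-of-PBW-order'' weight, rewriting each word as a $\bbz$-combination of elements of $\mathcal{B}$ by iterating three kinds of straightening move: (i) swapping divided powers of $x_\alpha^\pm$ with those of $x_\beta^\pm$ for $\alpha\neq\beta$, which produces shorter monomials involving the root vector of type $\alpha+\beta$ with integer coefficients via the Chevalley structure constants; (ii) swapping Cartan-type divided powers past root vectors, controlled by $[h_\alpha,x_\beta^\pm]=\pm\langle\beta,\alpha^\vee\rangle x_\beta^\pm$ and producing the usual binomial shifts; and (iii) the key swap of $(x_\alpha^+\otimes a)^{(s)}$ past $(x_\alpha^-\otimes b)^{(r)}$, obtained by applying $\Omega_\alpha$ to the $\bu_\bbz(\lie{sl}_2\otimes A)$ straightening identity, whose Cartan remainder is precisely encoded by the $p_\alpha(\chi)$. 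Once the word is in PBW order, what remains is a product of Cartan divided powers in $\bu(\lie h\otimes A)$; Proposition~\ref{degq}, already invoked by the author in the corollary, must provide the integral change of basis from these monomials to the $f^0(\und{\chi})$. The real difficulty concentrates on step (iii): the recursive definition of $p(\varphi,\chi)$ is tailored precisely to match the recursion satisfied by this Cartan remainder, and making that match explicit---over $\bbz$, not just $\bbc$---is the technical core of the theorem.
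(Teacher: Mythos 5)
Your proposal is correct and follows the paper's route: the part that actually proves this corollary is your second paragraph, which shows that $\mathcal{B}$ is a $\bbc$-basis of $\bu(\lie g\otimes A)$ via the PBW decomposition and the leading-term statement of Proposition~\ref{degq} --- exactly the two ingredients the paper cites. Note, however, that the corollary is stated as a consequence of Theorem~\ref{thm} (that $\mathcal{B}$ is a $\bbz$-basis of $\bu_\bbz(\lie g\otimes A)$), so your first and third paragraphs, which re-derive that theorem by the same triangular-decomposition and straightening strategy the paper itself uses, are redundant for this particular statement.
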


The remainder of this chapter is devoted to the proof of
Theorem \ref{thm}. First we show that
$\mathcal{B}\subset\bu_\bbz(\lie g\otimes A)$. This allows us
to define $\bbz$-subalgebras $\bu_\bbz^+(\lie g\otimes A)$,
$\bu_\bbz^-(\lie g\otimes A)$, and $\bu_\bbz^0(\lie g\otimes
A)$. Then we prove that $\mathcal{B}_+$, $\mathcal{B}_-$ and
$\mathcal{B}_0$ are $\bbz$-bases of these subalgebras
respectively. Finally, we prove a triangular decomposition
$\bu_\bbz(\lie g\otimes A)=\bu_\bbz^-(\lie g\otimes
A)\bu_\bbz^0(\lie g\otimes A)\bu_\bbz^+(\lie g\otimes A)$. This
means that as $\bbz$-modules $\bu_\bbz(\lie g\otimes
A)\cong\bu_\bbz^-(\lie g\otimes A)\otimes\bu_\bbz^0(\lie
g\otimes A)\otimes\bu_\bbz^+(\lie g\otimes A)$. The theorem
follows.

\section{The Subalgebras $\bu_\bbz^\pm(\lie g\otimes A)$}

Define $\bu_\bbz^\pm(\lie g\otimes A)$ to be the
$\bbz$-subalgebra of $\bu_\bbz(\lie g\otimes A)$ generated by
$$\left\{\left(x_\alpha^\pm\otimes b\right)^{(r)}:\alpha\in
R^+,\ b\in\bb,\ r\in\bbz_{\geq0}\right\}.$$

\subsection{$\mathcal{B}_\pm$ is a $\bbz$-Basis for $\bu_\bbz^\pm(\lie g\otimes A)$}

\begin{lem}\label{pmbasislem}
Let $a,b\in\bb$, $r,s\in\bbz_{\geq0}$, and $\alpha,\beta\in
R^+$ be
given. Define\\
$R_{\alpha,\beta}=\{i\alpha+j\beta:i,j\in\bbz\}\cap R$. Then
the following identities hold:

$(a)$ If $R_{\alpha,\beta}$ is of type $A_2$
\begin{eqnarray*}
x_\alpha^\pm(r\chi_a)x_\beta^\pm(s\chi_b)
&=&\sum_{k=0}^{\min(r,s)}\varepsilon_k\left(x_\beta^\pm\otimes b\right)^{(s-k)}
\left(x_{\alpha+\beta}^\pm\otimes ab\right)^{(k)}\left(x_\alpha^\pm\otimes a\right)^{(r-k)}
\end{eqnarray*}
where $\varepsilon_k\in\{1,-1\}$, for all $k$.

$(b)$ If $R_{\alpha,\beta}$ is of type $B_2$, then
\begin{eqnarray*}
x_\alpha^\pm(r\chi_a)x_\beta^\pm(s\chi_b)
&=&\sum\varepsilon_{k_1,k_2}\left(x_\beta^\pm\otimes b\right)^{(s-k_1-k_2)}
\prod_{j=1}^2\left(x_{j\alpha+\beta}^\pm\otimes a^jb\right)^{(k_j)}\left(x_\alpha^\pm\otimes a\right)^{(r-k_1-2k_2)}
\end{eqnarray*}
where the sum is over all $k_1,k_2\in\bbz_{\geq0}$ such that
$k_1+k_2\leq s$ and $k_1+2k_2\leq r$, and
$\varepsilon_{k_1,k_2}\in\{1,-1\}$, for all $k_1,k_2$.

$(c)$ If $R_{\alpha,\beta}$ is of type $G_2$, then
\begin{eqnarray*}
x_\alpha^\pm(r\chi_a)x_\beta^\pm(s\chi_b)
&=&\sum\varepsilon_{k_1,k_2,k_3,k_4}\left(x_\beta^\pm\otimes b\right)^{\left(s-\sum_{j=1}^3k_j-2k_4\right)}
\prod_{j=1}^3\left(x_{j\alpha+\beta}^\pm\otimes a^jb\right)^{(k_j)}\\
&\times&\left(x_{3\alpha+2\beta}^\pm\otimes a^3b^2\right)^{(k_4)}
\left(x_\alpha^\pm\otimes a\right)^{\left(r-\sum_{j=1}^3jk_j-3k_4\right)}
\end{eqnarray*}
where the sum is over all $k_1,k_2,k_3,k_4\in\bbz_{\geq0}$ such
that $k_1+k_2+k_3+2k_4\leq s$ and  $k_1+2k_2+3k_3+3k_4\leq r$,
and $\varepsilon_{k_1,k_2,k_3,k_4}\in\{1,-1\}$, for all
$k_1,k_2,k_3,k_4$.
\end{lem}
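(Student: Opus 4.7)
The plan is to reduce each of (a), (b), (c) to the classical Chevalley commutator formula in the rank-two positive (or negative) nilpotent subalgebra, using a structural identification that automatically installs the correct powers of $a$ and $b$.

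The crucial observation is the following. For $\alpha, \beta$ as in the statement, let $L^\pm$ denote the Lie subalgebra of $\lie g \otimes A$ generated by $x_\alpha^\pm \otimes a$ and $x_\beta^\pm \otimes b$. An easy induction on bracket length shows $L^\pm$ has basis $\{x_{i\alpha+j\beta}^\pm \otimes a^i b^j : i\alpha+j\beta \in R_{\alpha,\beta} \cap R^+\}$, and the linear map
$$\phi : \bigoplus_{\gamma \in R_{\alpha,\beta}\cap R^+} \bbc\, x_\gamma^\pm \longrightarrow L^\pm, \qquad x_{i\alpha+j\beta}^\pm \mapsto x_{i\alpha+j\beta}^\pm \otimes a^i b^j,$$
is a Lie algebra isomorphism: writing $\gamma = i_1\alpha + j_1\beta$, $\delta = i_2\alpha+j_2\beta$, one has
$$[\phi(x_\gamma^\pm), \phi(x_\delta^\pm)] = N_{\gamma,\delta}\, x_{\gamma+\delta}^\pm \otimes a^{i_1+i_2} b^{j_1+j_2} = \phi([x_\gamma^\pm, x_\delta^\pm]),$$
since $(i_1+i_2)\alpha+(j_1+j_2)\beta = \gamma+\delta$. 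The induced algebra map on universal enveloping algebras preserves divided powers and sends $(x_\alpha^\pm)^{(r)}(x_\beta^\pm)^{(s)}$ to $(x_\alpha^\pm \otimes a)^{(r)}(x_\beta^\pm \otimes b)^{(s)} = x_\alpha^\pm(r\chi_a)\, x_\beta^\pm(s\chi_b)$.

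This reduces each identity to the corresponding divided-power commutator formula in the classical nilpotent subalgebra --- precisely the rank-two Chevalley commutator identity, whose coefficients lie in $\{\pm 1\}$ because the structure constants $\pm 2$ appearing in type $B_2$ and $\pm 2, \pm 3$ in type $G_2$ are absorbed by the factorial denominators of the divided powers. I would prove these classical identities by induction on $r$ with $s$ fixed, peeling off one factor via $(x_\alpha^\pm)^{(r)} = r^{-1} x_\alpha^\pm (x_\alpha^\pm)^{(r-1)}$, commuting the extracted $x_\alpha^\pm$ through the already-ordered PBW expression using the elementary identity $[x, y^{(k)}] = y^{(k-1)}[x,y]$ whenever $[[x,y],y] = 0$, and collecting terms. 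The hypothesis $[[x,y],y] = 0$ holds in each instance required because the relevant iterated bracket either lands outside $R_{\alpha,\beta}$ or has weight exceeding the highest positive root in $R_{\alpha,\beta}$.

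The main obstacle is the combinatorial bookkeeping in type $G_2$, where four non-$\alpha$ positive roots contribute divided-power factors $x_{\alpha+\beta}^\pm, x_{2\alpha+\beta}^\pm, x_{3\alpha+\beta}^\pm, x_{3\alpha+2\beta}^\pm$, the nilpotency class rises to five, and the constraints $k_1+k_2+k_3+2k_4 \leq s$ and $k_1+2k_2+3k_3+3k_4 \leq r$ must be extracted from the iterated commutators (they encode how many copies of $x_\alpha^\pm$ and $x_\beta^\pm$ are consumed by each secondary generator). Once the rank-two classical identity is established, applying $\phi$ transports it verbatim into $\bu(\lie g \otimes A)$, attaching the required monomials $a^i b^j$ to each root vector $x_{i\alpha+j\beta}^\pm$ and producing precisely the expressions in (a), (b), (c).
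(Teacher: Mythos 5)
Your reduction to the undecorated rank-two case via the Lie algebra homomorphism $\phi\colon x_{i\alpha+j\beta}^\pm\mapsto x_{i\alpha+j\beta}^\pm\otimes a^ib^j$ is sound and is a genuinely different organization from the paper's, which (per the sketch given and the details in \cite{Ch}) runs the induction --- on $r$ for $s=1$, then on $s$ for general $r$ --- directly on the tensored elements. The homomorphism property is exactly the one-line weight computation you give, the image vectors are nonzero because $\bb$ is closed under multiplication, and the induced map on enveloping algebras carries divided powers to divided powers, so the classical identities do transport verbatim; this buys a clean separation of the $A$-decoration from the rank-two combinatorics. (Two small corrections: the spanning set of $L^\pm$ is indexed by the roots $i\alpha+j\beta$ with $i,j\ge 0$, not all of $R_{\alpha,\beta}\cap R^+$, which differ when $\alpha-\beta$ is a root; and injectivity of $\phi$ is never needed, only that it is a homomorphism.)

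The gap is in your proof of the classical identities, specifically in case $(c)$. You assert that $[[x,y],y]=0$ ``holds in each instance required,'' but this fails in type $G_2$ at the step where the peeled-off $x_\alpha^\pm$ is commuted past the factor $\left(x_{\alpha+\beta}^\pm\right)^{(k_1)}$: since $\alpha+2(\alpha+\beta)=3\alpha+2\beta$ is a root of $G_2$, the double bracket $[[x_\alpha^\pm,x_{\alpha+\beta}^\pm],x_{\alpha+\beta}^\pm]$ is a nonzero multiple of $x_{3\alpha+2\beta}^\pm$, and the identity $[x,y^{(k)}]=y^{(k-1)}[x,y]$ does not apply. You need the full expansion $x\,y^{(k)}=\sum_{j\ge 0}y^{(k-j)}\,\tfrac{(-\ad y)^j(x)}{j!}$, whose $j=2$ term here equals $\pm 3\,x_{3\alpha+2\beta}^\pm$ (the structure constants involved being $\pm2$ and $\pm3$) and must be merged into the $\left(x_{3\alpha+2\beta}^\pm\right)^{(k_4)}$ factor. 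Showing that these extra contributions, together with the binomial multiplicities created whenever a newly produced root vector is absorbed into an existing divided power, leave every coefficient equal to $\pm1$ is precisely the hard content of part $(c)$, and your sketch does not address it. Cases $(a)$ and $(b)$ are unaffected, since there every relevant double bracket lands outside the root system.
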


The proof of this lemma proceeds by induction on $r$ for the
case $s=1$ and then by induction on $s$ for a general $r$. The
details can be found in \cite{Ch}.

\begin{cor}\label{pmbasis}
$\mathcal{B}_\pm$ is a $\bbz$-basis for $\bu_\bbz^\pm(\lie
g\otimes A)$.
\end{cor}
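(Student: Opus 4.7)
The plan is to prove three things: (i) the containment $\mathcal{B}_\pm \subset \bu_\bbz^\pm(\lie g \otimes A)$, (ii) $\bbz$-linear independence of $\mathcal{B}_\pm$, and (iii) $\bbz$-spanning. Containment is immediate, since each $f^\pm(\und{\psi}) = \prod_i x_{\beta_i}^\pm(\psi_i)$ is visibly a product of the generators $(x_\alpha^\pm \otimes b)^{(r)}$. Linear independence follows from the PBW theorem applied to $\lie n^\pm \otimes A$: ordering the basis $\{x_\alpha^\pm \otimes b : \alpha \in R^+,\ b \in \bb\}$ of $\lie n^\pm \otimes A$ by the fixed order on $R^+$ gives a $\bbc$-basis of $\bu(\lie n^\pm \otimes A)$ consisting of ordered monomials, and the elements of $\mathcal{B}_\pm$ differ from these only by nonzero rational factors $\prod_{i,b} 1/\psi_i(b)!$, hence are also $\bbc$-linearly independent.

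The substantive part is spanning. Let $M$ be the $\bbz$-span of $\mathcal{B}_\pm$. Since $M$ contains each generator (take $\und{\psi}$ with a single nonzero coordinate $r\chi_b$), it suffices to show $M$ is closed under multiplication. Expanding the definition $x_\alpha^\pm(\psi) = \prod_{a \in \bb}(x_\alpha^\pm \otimes a)^{(\psi(a))}$ presents any product $f^\pm(\und{\psi})f^\pm(\und{\psi}')$ as a word in single-element divided powers $(x_{\beta_i}^\pm \otimes a)^{(r)}$. The strategy is to straighten this word into PBW order with respect to $\beta_1 < \cdots < \beta_m$ using Lemma \ref{pmbasislem}: whenever adjacent factors $(x_\alpha^\pm \otimes a)^{(r)}(x_\beta^\pm \otimes b)^{(s)}$ appear with $\beta$ preceding $\alpha$ in the fixed order, the lemma rewrites the pair as a $\bbz$-linear combination having $(x_\beta^\pm \otimes b)^{(\cdot)}$ on the left, $(x_\alpha^\pm \otimes a)^{(\cdot)}$ on the right, and intermediate divided powers at positive roots $j\alpha + k\beta$. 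The $A_1\times A_1$ case not addressed by the lemma (i.e., $\alpha+\beta\notin R$) reduces to direct commutation since $[x_\alpha^\pm,x_\beta^\pm]=0$, and repeated factors at a common root are merged via the integer identity $(x_\alpha^\pm \otimes a)^{(r)}(x_\alpha^\pm \otimes a)^{(s)} = \binom{r+s}{r}(x_\alpha^\pm \otimes a)^{(r+s)}$.

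Termination is handled by a double induction: outer on the total degree $d = \sum_i(|\psi_i|+|\psi'_i|)$, inner on the number of inversions in the root sequence of the word. The pure ``swap'' term produced by the lemma (all $k_j=0$) preserves total degree but strictly reduces the inversion count, while every other term has strictly smaller total degree, since the intermediate divided powers at $j\alpha+k\beta$ consume factors from both $\alpha$ and $\beta$; a direct exponent count verifies this in each of the $A_2$, $B_2$, $G_2$ cases of the lemma. The main obstacle is organizing this combinatorial recursion cleanly; once that is done, the lemma's signs $\varepsilon \in \{-1,+1\}$ together with the integer binomials arising from consolidation keep all coefficients in $\bbz$ throughout, yielding the desired $\bbz$-linear expansion in $\mathcal{B}_\pm$.
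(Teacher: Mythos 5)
Your proposal is correct and follows essentially the same route as the paper: reduce spanning to showing that products of the generators $(x_\alpha^\pm\otimes b)^{(r)}$ lie in the $\bbz$-span of $\mathcal{B}_\pm$, merge repeated factors via $\binom{r+s}{r}$, and straighten out-of-order pairs with Lemma \ref{pmbasislem}, inducting on degree since all non-swap terms strictly drop it. Your added details (linear independence via PBW, the explicit inversion-count inner induction, and the exponent check in the $A_2$, $B_2$, $G_2$ cases) merely make explicit what the paper's terser argument leaves implicit.
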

\begin{proof}
It will suffice to show that any product of elements of the set
$\{(x_\alpha^\pm\otimes b): \alpha\in R^+,\ b\in\bb\}$ is in
the $\bbz$-span of $\mathcal{B}_\pm$. Clearly, for all
$\alpha\in R^+$ and $b\in\bb$,
$$(x_\alpha^\pm\otimes b)^{(r)}(x_\alpha^\pm\otimes b)^{(s)}=\binom{r+s}{r}(x_\alpha^\pm\otimes
b)^{(r+s)}$$ So it will suffice to show if $\alpha,\beta\in
R^+$, $a,b\in\bb$ and $r,s\in\bbz_{\geq0}$ then
$\left[(x_\alpha^\pm\otimes a)^{(r)},(x_\beta^\pm\otimes
b)^{(s)}\right]$ is in the $\bbz$-span of $\mathcal{B}_\pm$ and
has degree less than $r+s$. If $\alpha+\beta\notin R^+$ this
claim is trivially true. If $\alpha+\beta\in R^+$ this claim
can be shown by induction on $r+s$. If $r+s\leq1$ the claim is
trivially true. The inductive step is true by the lemma.
\end{proof}

\section{A Straightening Lemma}

Define functions $D^\pm:\f^3\to\bu(\lie{sl}_2\otimes A)$ by
\begin{eqnarray*}
D^\pm(\psi_1,\psi_2,0)&=&\delta_{|\psi_1|+|\psi_2|,0}\\
D^\pm(\psi_1,\psi_2,\chi_b)&=&\delta_{|\psi_1|,|\psi_2|}\m(\psi_1)\m(\psi_2)
\left(x^\pm\otimes b\pi(\psi_1)\pi(\psi_2)\right)\\
D^\pm(\psi_1,\psi_2,\psi_3)&=&\frac{1}{|\psi_3|}\sum_{\substack{\phi_1\in\f(\psi_1)\\ \phi_2\in\f(\psi_2)}}
\sum_{b\in\supp\psi_3}D^\pm(\phi_1,\phi_2,\chi_b)D^\pm\left(\psi_1-\phi_1,\psi_2-\phi_2,\psi_3-\chi_b\right)
\end{eqnarray*}

\begin{rems}
(a) $D^\pm(\psi_1,\psi_2,\psi_3)=0$ if
$|\psi_1|\neq|\psi_2|$.\\
(b) The $D^\pm(\psi_1,\psi_2,\psi_3)$ are a generalization of
Garland's $D_k^\pm$ as
$D^+(k\chi_t,k\chi_1,r\chi_1)=D^+_k\left(\xi^{(r)}\right)$ and
$D^-(k\chi_t,k\chi_1,r\chi_t)=D^-_k\left(\xi^{(r)}\right)$,
\cite{G}.
\end{rems}

\subsection{An Explicit Formula for
$D^\pm(\psi,|\psi|\chi_b,k\chi_c)$}

\textbf{Definition.} Given $\chi\in\f$ define
$$\mathcal{P}(\chi)=\left\{\psi\in\f(\f):\sum_{\phi\in\f}\psi(\phi)\phi=\chi\right\}$$
and $\cp_k(\chi)=\cp(\chi)\cap\f_k(\f)$. A \emph{partition of
$\chi$} is an element of $\cp(\chi)$.

The following proposition can easily be shown by induction on
$k$.

\begin{prop}\label{Dpmexp}
For all $\psi\in\f$, $k\in\bbz_{\geq0}$ with $k\geq1$, and
$b,c\in A$.
$$D^\pm(\psi,|\psi|\chi_b,k\chi_c)=\sum_{\chi\in\cp_k(\psi)}
\prod_{\phi\in\f}\left(\m(\phi)\left(x^\pm\otimes cb^{|\phi|}\pi(\phi)\right)\right)^{(\chi(\phi))}$$
\end{prop}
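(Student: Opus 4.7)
The plan is to proceed by induction on $k$, as suggested. For the base case $k=1$, the only element of $\cp_1(\psi)$ is $\chi_\psi$ (the characteristic function on $\psi\in\f$), so the proposed right-hand side reduces to $\m(\psi)(x^\pm\otimes cb^{|\psi|}\pi(\psi))$. The left-hand side $D^\pm(\psi,|\psi|\chi_b,\chi_c)$ is computed directly from the second defining clause of $D^\pm$: using $\m(|\psi|\chi_b)=1$ and $\pi(|\psi|\chi_b)=b^{|\psi|}$, we get the same expression. So the base case holds.

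For the inductive step, I would apply the recursive definition of $D^\pm$ with $\psi_3=k\chi_c$. Since $\supp(k\chi_c)=\{c\}$, the $b$-sum collapses to a single term. Among all pairs $(\phi_1,\phi_2)$ with $\phi_1\in\f(\psi)$ and $\phi_2\in\f(|\psi|\chi_b)$, only those with $|\phi_1|=|\phi_2|$ contribute (by the preceding remark $(a)$); since $\phi_2$ is forced to be a multiple of $\chi_b$, this pins down $\phi_2=|\phi_1|\chi_b$. Using the base case on $D^\pm(\phi_1,|\phi_1|\chi_b,\chi_c)$ and the inductive hypothesis on $D^\pm(\psi-\phi_1,(|\psi|-|\phi_1|)\chi_b,(k-1)\chi_c)$, this yields
\begin{equation*}
D^\pm(\psi,|\psi|\chi_b,k\chi_c)=\frac{1}{k}\sum_{\phi_1\in\f(\psi)}u_{\phi_1}\sum_{\chi'\in\cp_{k-1}(\psi-\phi_1)}\prod_{\phi\in\f}u_\phi^{(\chi'(\phi))},
\end{equation*}
where $u_\phi=\m(\phi)(x^\pm\otimes cb^{|\phi|}\pi(\phi))$.

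The key combinatorial step is to recognize that the map $(\phi_1,\chi')\mapsto \chi:=\chi'+\chi_{\phi_1}$ sends pairs with $\chi'\in\cp_{k-1}(\psi-\phi_1)$ to elements of $\cp_k(\psi)$, and the preimage of a given $\chi\in\cp_k(\psi)$ consists of all $(\phi_1,\chi-\chi_{\phi_1})$ for $\phi_1\in\supp\chi$, weighted naturally. Since the $u_\phi$ all lie in the abelian subalgebra $x^\pm\otimes A$, they commute, and the identity $u_{\phi_1}\cdot u_{\phi_1}^{(\chi(\phi_1)-1)}=\chi(\phi_1)\,u_{\phi_1}^{(\chi(\phi_1))}$ yields
\begin{equation*}
u_{\phi_1}\prod_{\phi\in\f}u_\phi^{(\chi'(\phi))}=\chi(\phi_1)\prod_{\phi\in\f}u_\phi^{(\chi(\phi))}
\end{equation*}
whenever $\chi=\chi'+\chi_{\phi_1}$. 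Summing $\chi(\phi_1)$ over $\phi_1\in\supp\chi$ gives exactly $|\chi|=k$, so the factor $1/k$ cancels and the sum collapses to $\sum_{\chi\in\cp_k(\psi)}\prod_\phi u_\phi^{(\chi(\phi))}$, as claimed.

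The only subtle point is bookkeeping: one must treat $\phi=0$ as an allowed ``part,'' since the pair $(\phi_1,\phi_2)=(0,0)$ contributes nontrivially in the recursion (the base case gives $D^\pm(0,0,\chi_c)=x^\pm\otimes c$, which is $u_0$). Verifying on a minimal example such as $\psi=\chi_a$, $k=2$ is a useful sanity check that the convention is consistent. Otherwise, the induction is a routine reindexing, and the main (mild) obstacle is simply keeping the three nested sums straight while tracking the cancellation of $1/k$ against the combinatorial weight $\sum_{\phi_1}\chi(\phi_1)=k$.
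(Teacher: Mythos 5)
Your proof is correct and follows exactly the route the paper indicates (induction on $k$ via the recursive definition of $D^\pm$, with the $\delta_{|\phi_1|,|\phi_2|}$ constraint pinning $\phi_2=|\phi_1|\chi_b$ and the reindexing $(\phi_1,\chi')\mapsto\chi'+\chi_{\phi_1}$ cancelling the $1/k$). The observation that the zero function must be admitted as a part of a partition is a genuine and worthwhile point of care, and your bookkeeping handles it correctly.
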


\subsection{The Degree of $D^\pm(\psi_1,\psi_2,\psi_3)$}

The following proposition can easily be shown by induction on
$|\psi_3|$.

\begin{prop}\label{degD}
If $\alpha\in R^+$, and $\psi_1,\psi_2,\psi_3\in\f$ then
$D^\pm(\psi_1,\psi_2,\psi_3)$ is homogeneous of degree
$|\psi_3|$.
\end{prop}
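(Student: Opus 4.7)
The plan is to prove this by induction on $k := |\psi_3|$, paralleling the recursion in the definition of $D^\pm$.

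For the base cases, I would read the result directly off the defining formulas. When $\psi_3 = 0$, the value $\delta_{|\psi_1|+|\psi_2|,0}$ is a scalar, hence lies in $\bu_0(\lie{sl}_2\otimes A)$ with degree $0 = |\psi_3|$. When $|\psi_3| = 1$, so $\psi_3 = \chi_b$, the formula exhibits $D^\pm(\psi_1,\psi_2,\chi_b)$ as a scalar multiple of the single element $x^\pm\otimes b\pi(\psi_1)\pi(\psi_2)\in\lie{sl}_2\otimes A \subset \bu_1(\lie{sl}_2\otimes A)$, so its degree is $1 = |\psi_3|$.

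For the inductive step, I would fix $k \geq 2$, assume the proposition for all $\psi_3'$ with $|\psi_3'| < k$, and consider $|\psi_3| = k$. The recursion
$$D^\pm(\psi_1,\psi_2,\psi_3) \;=\; \frac{1}{k}\sum_{\phi_1,\phi_2,b}\, D^\pm(\phi_1,\phi_2,\chi_b)\,D^\pm(\psi_1-\phi_1,\psi_2-\phi_2,\psi_3-\chi_b)$$
writes $D^\pm(\psi_1,\psi_2,\psi_3)$ as a sum of products of a degree-$1$ element (by the base case) and a degree-$(k-1)$ element (by the inductive hypothesis, using $|\psi_3-\chi_b|=k-1$). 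Since $\bu_1(\lie{sl}_2\otimes A)\cdot\bu_{k-1}(\lie{sl}_2\otimes A)\subset \bu_k(\lie{sl}_2\otimes A)$ in the filtered enveloping algebra, each summand, and hence the whole sum, lies in $\bu_k(\lie{sl}_2\otimes A)$, giving the desired bound.

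The main subtlety is interpretive rather than technical: the argument above directly yields that the filtration degree is at most $|\psi_3|$, which is what is needed in the subsequent sections. If ``homogeneous of degree $|\psi_3|$'' is meant as strict equality, I would additionally track the principal symbol in the associated graded $\gr\bu(\lie{sl}_2\otimes A)\cong S(\lie{sl}_2\otimes A)$; the obstacle there is ruling out cancellation at leading order among the summands of the recursion. In the specialization covered by \propref{Dpmexp}, this is already transparent: the explicit sum writes $D^\pm(\psi,|\psi|\chi_b,k\chi_c)$ as a combination of distinct products of exactly $k$ generators of $\lie{sl}_2\otimes A$, which are linearly independent in $S^k(\lie{sl}_2\otimes A)$, and an analogous ``distinct-monomial'' analysis generalizes the non-cancellation to arbitrary triples.
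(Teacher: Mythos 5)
Your proof is correct and follows the same route the paper indicates: the paper offers no written argument beyond ``can easily be shown by induction on $|\psi_3|$,'' and your induction on $|\psi_3|$ with the two base cases read off the defining formulas and the inductive step via the recursion is exactly that argument. Your closing remark on exact homogeneity is also sound, and is in fact immediate here since every term lives in the commutative subalgebra $\bu(\bbc x^\pm\otimes A)\cong S(\bbc x^\pm\otimes A)$, where a sum of products of exactly $|\psi_3|$ generators is automatically homogeneous of that degree.
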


\subsection{The Definition of $\bbd(\psi_1,\psi_2,\psi_3)$}

Define $\bbd:\f^3\to\bu(\lie{sl}_2\otimes A)$ by
\begin{eqnarray*}
\bbd(\psi_1,\psi_2,\psi_3)&=&\sum_{\substack{\phi_1\in\f(\psi_1)\\ \phi_2\in\f(\psi_2)}}p(\phi_1,\phi_2)
D^+(\psi_1-\phi_1,\psi_2-\phi_2,\psi_3)
\end{eqnarray*}

Using the previous two propositions it can be easily shown
that, for all $\psi_1,\psi_2,\psi_3\in\f$,
$\bbd(\psi_1,\psi_2,\psi_3)$ has degree less than or equal to
$|\psi_3|+|\psi_1|$

Set
$$D_\alpha^\pm\left(\psi_1,\psi_2,\psi_3\right)=\Omega_\alpha\left(D^\pm\left(\psi_1,\psi_2,\psi_3\right)\right)$$
$$\bbd_\alpha\left(\psi_1,\psi_2,\psi_3\right)=\Omega_\alpha\left(\bbd\left(\psi_1,\psi_2,\psi_3\right)\right)$$

\subsection{The Straightening Lemma}

\begin{lem}\label{basic}
For all $\varphi,\chi\in\f$.
$$x^+(\varphi)x^-(\chi)=
\sum_{\substack{\psi_1,\psi_2,\phi_1,\phi_2\in\f\\ \psi_1+\psi_2\in\f(\chi)\\ \phi_1+\phi_2\in\f(\varphi)}}
(-1)^{|\psi_1|+|\psi_2|}D^-(\phi_1,\psi_1,\chi-\psi_1-\psi_2)\bbd(\phi_2,\psi_2,\varphi-\phi_1-\phi_2)$$
\end{lem}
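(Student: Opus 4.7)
The plan is to prove the identity by induction on $|\chi|$. In the base case $|\chi|=0$, the constraint $\psi_1+\psi_2\in\f(0)$ forces $\psi_1=\psi_2=0$, so the right-hand side reduces to
\[
\sum_{\phi_1+\phi_2\in\f(\varphi)} D^-(\phi_1,0,0)\,\bbd(\phi_2,0,\varphi-\phi_1-\phi_2).
\]
The definition of $D^-$ gives $D^-(\phi_1,0,0)=\delta_{|\phi_1|,0}$, forcing $\phi_1=0$. Unwinding $\bbd(\phi_2,0,\varphi-\phi_2)$ and using the vanishing of $p(0,\tilde\phi_2)$ unless $\tilde\phi_2=0$, together with the vanishing of $D^+(\phi_2,0,\cdot)$ unless $\phi_2=0$, the sum further collapses to $D^+(0,0,\varphi)$. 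A direct induction on $|\varphi|$ from the recursion for $D^+$ then shows $D^+(0,0,\varphi)=x^+(\varphi)$, matching the left-hand side $x^+(\varphi)x^-(0)=x^+(\varphi)$. The symmetric check handles $|\varphi|=0$.

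For the inductive step I would fix $c\in\supp\chi$ and exploit the commutativity of $\{x^-\otimes a\mid a\in A\}$ to write $x^-(\chi)=\tfrac{1}{\chi(c)}\,x^-(\chi-\chi_c)(x^-\otimes c)$. Applying the inductive hypothesis to $(\varphi,\chi-\chi_c)$ and multiplying on the right by $(x^-\otimes c)/\chi(c)$ reduces the problem to moving $(x^-\otimes c)$ leftward through each $\bbd(\phi_2,\psi_2',\varphi-\phi_1-\phi_2)$ factor. Each such push-through uses the $\lie{sl}_2$ commutators $[x^+\otimes b,x^-\otimes c]=h\otimes bc$ and $[h\otimes d,x^-\otimes c]=-2\,x^-\otimes dc$ inside the $D^+$ and $p$ components of $\bbd$. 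The resulting cross-terms are then organized by the three ways the added factor $\chi_c$ can be distributed: (i) directly absorbed into the third argument of $D^-$, (ii) paired with a piece of $\varphi$ to form an $h$-contribution that increases $\psi_1$ or $\psi_2$ and is straightened by a new $p$-factor, or (iii) combined with a $D^+$-factor of $\bbd$ to produce a new $x^-$-term with modified $A$-element feeding back into $D^-$.

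The main obstacle is the combinatorial bookkeeping required to match these cross-terms against the asserted sum for $(\varphi,\chi)$. The recursions defining $D^\pm$ and $\bbd$ are engineered precisely so that these contributions reorganize into a clean sum indexed by $(\phi_1,\phi_2,\psi_1,\psi_2)$, but the verification requires careful reindexing and repeated use of the divided-power identity $(x^-\otimes c)^{(n-1)}(x^-\otimes c)=n(x^-\otimes c)^{(n)}$ to absorb the prefactor $1/\chi(c)$. The alternating signs $(-1)^{|\psi_1|+|\psi_2|}$ emerge from the minus sign in the defining recursion for $p$ combined with the factors of $-2$ in the $[h,x^-]$ commutators; tracking these signs consistently across all three distribution cases is the most delicate part. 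Once the indexing is set up, the identity reduces to a purely formal consequence of the recursions for $D^\pm$, $p$, and $\bbd$.
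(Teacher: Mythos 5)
Your base case is right and coincides with the paper's: for $\chi=0$ the right-hand side collapses to $\bbd(0,0,\varphi)=D^+(0,0,\varphi)=x^+(\varphi)$. Your overall strategy --- induction on $|\chi|$, peeling off one factor $x^-\otimes c$, applying the inductive hypothesis, and pushing $(x^-\otimes c)$ leftward through the $\bbd$ terms --- is also exactly the paper's strategy. The problem is that the inductive step as you describe it is an outline with the proof missing: the ``combinatorial bookkeeping'' you defer to a final sentence is the entire content of the lemma. The paper isolates the push-through as a separate statement (Lemma \ref{idbbd}),
$$\sum_{\phi\in\f(\varphi)}\bbd(\phi,\chi,\varphi-\phi)(x^-\otimes b)
=-(\chi(b)+1)\sum_{\phi'\in\f(\varphi)}\bbd(\phi',\chi+\chi_b,\varphi-\phi')
+\sum_{\phi}\sum_{\phi_1,\phi_2}(|\phi_1|+1)D^-(\phi_1,\phi_2,\chi_b)\,\bbd(\phi-\phi_1,\chi-\phi_2,\varphi-\phi),$$
whose proof is itself an induction on $|\varphi|-|\phi|$ resting on four further identities (Lemma \ref{qpastx+}, Proposition \ref{idD}, Proposition \ref{x+pastq}, and \eqref{eqnq}). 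Two details in your sketch signal that the missing argument would not go through as described. First, the identity above holds only for the \emph{sum} over the first argument $\phi$; your phrase ``moving $(x^-\otimes c)$ through each $\bbd$ factor'' suggests a term-by-term commutation rule, which does not exist. Second, the sign $(-1)^{|\psi_1|+|\psi_2|}$ increments through the factor $-(\psi_2(b)+1)$ attached to $\bbd(\cdot,\psi_2+\chi_b,\cdot)$, not through the $-2$ in $[h\otimes d,x^-\otimes c]$ as you propose; chasing the sign the way you describe would not reproduce the stated exponent.

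There is also a more localized flaw: you fix a single $c\in\supp\chi$ and divide by $\chi(c)$, whereas the paper writes $|\chi|\,x^+(\varphi)x^-(\chi)=\sum_{b\in\supp\chi}x^+(\varphi)x^-(\chi-\chi_b)(x^-\otimes b)$ and keeps the full sum over $b$. This is not cosmetic. After the push-through, the cross terms of the form $D^-(\tau,\tau',\chi_b)D^-(\phi_1'-\tau,\psi_1'-\tau',\chi-\chi_b-\psi_1'-\psi_2')$ recombine into $(|\chi|-|\psi_2'|)\,D^-(\phi_1',\psi_1',\chi-\psi_1'-\psi_2')$ only by Proposition \ref{idD}$(ii)$, which requires summing over all $b$ in the support of the third argument. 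With a single fixed $c$ that recombination is unavailable, and the induction stalls at precisely the step you identify as the main obstacle.
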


This lemma is proved in a later section. Before proving the
lemma we will state and prove some corollaries and other
necessary lemmas.

\begin{cor}\label{qinuz}
For all $\alpha\in R^+$, and $\varphi,\chi,\psi\in\f$\\
\indent$(i)_{\chi}$
$$D^+_\alpha\left(\varphi,\chi,\psi\right),
D^-_\alpha\left(\varphi,\chi,\psi\right)\in\bu_\bbz(\lie g\otimes A)$$

$(ii)_{\chi}$
$$p_\alpha\left(\varphi,\chi\right)\in\bu_\bbz(\lie g\otimes A)$$
\end{cor}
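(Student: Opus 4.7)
The plan is to prove $(i)$ and $(ii)$ jointly by induction on $|\chi|$. The Straightening Lemma (Lemma~\ref{basic}) is the engine: applying $\Omega_\alpha$, its left-hand side becomes $x_\alpha^+(\varphi)x_\alpha^-(\chi)$, a product of divided powers of generators and hence manifestly in $\bu_\bbz(\lie g\otimes A)$. Its right-hand side is a double sum, from which I will extract the desired quantity as a distinguished summand and defer the remaining summands to the inductive hypothesis. For the base case $|\chi|=0$, $(ii)$ reduces to $p_\alpha(0,0)=1$, while $(i)$ reduces to the identity $D^\pm_\alpha(0,0,\psi)=x^\pm_\alpha(\psi)$, verified by an auxiliary induction on $|\psi|$ from the recursion defining $D^\pm$.

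For the inductive step I first establish $(ii)_\chi$. In Lemma~\ref{basic} applied to $(\varphi,\chi)$, the summand $(\phi_1,\psi_1,\phi_2,\psi_2)=(0,0,\varphi,\chi)$ contributes $(-1)^{|\chi|}p(\varphi,\chi)$: indeed $D^-(0,0,0)=1$, and in the defining sum for $\bbd(\varphi,\chi,0)$ only $\phi_1'=\varphi,\phi_2'=\chi$ yields a nonzero $D^+(0,0,0)=1$, so $\bbd(\varphi,\chi,0)=p(\varphi,\chi)$. Every other summand is either zero or a product in which some $D^\pm$- or $p$-factor has second argument of cardinality strictly less than $|\chi|$; that factor then lies in $\bu_\bbz$ by the inductive hypothesis (after applying $\Omega_\alpha$), while the remaining factors are divided powers of generators (integral by Corollary~\ref{pmbasis}) or integer constants. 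Rearranging yields $p_\alpha(\varphi,\chi)\in\bu_\bbz$.

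To prove $(i)_\chi$, I apply Lemma~\ref{basic} instead to $(\varphi,\chi+\psi)$: the summand $(\phi_1,\psi_1,\phi_2,\psi_2)=(\varphi,\chi,0,0)$ contributes $(-1)^{|\chi|}D^-(\varphi,\chi,\psi)$ (via $\bbd(0,0,0)=1$), and the remaining summands are handled using $(ii)_\chi$ (just proved) together with the inductive hypothesis. The $D^+_\alpha$ case is obtained from the analogous straightening identity for $x^-(\chi')x^+(\varphi')$, or equivalently from the $\pm$-symmetry of the construction.

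The main obstacle is the factor $1/|\psi_3|$ in the recursion for $D^\pm$, which precludes any naive term-by-term induction on the cardinality of the third argument. The Straightening Lemma supplies identities in which these denominators have already cancelled, but its right-hand side is a very large sum, so the delicate step is verifying that every non-distinguished summand contains some factor whose second argument has cardinality strictly less than $|\chi|$. Extra care is needed for $(i)_\chi$, where some summands contain $D^\pm$-factors whose second argument has cardinality equal to $|\chi|$ but differs from $\chi$; these are resolved by a simultaneous triangular argument over all $\chi'$ with $|\chi'|=|\chi|$.
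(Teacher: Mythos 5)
Your proposal follows essentially the same route as the paper's proof: a joint induction on $|\chi|$ in the interleaved scheme $(ii)^{k}\Rightarrow(i)^{k}\Rightarrow(ii)^{k+1}$, where each step applies the Straightening Lemma (to $x^+(\varphi)x^-(\chi)$ for $(ii)$ and to $x^+(\varphi)x^-(\chi+\psi)$ for $(i)$), isolates the distinguished summand $(-1)^{|\chi|}p(\varphi,\chi)$, respectively $(-1)^{|\chi|}D^-(\varphi,\chi,\psi)$, and pushes every other summand onto the inductive hypothesis, with the same base cases $p(0,0)=1$ and $D^\pm(0,0,\psi)=x^\pm(\psi)$. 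The only cosmetic difference is that the paper obtains the $D^+$ case by applying the same Lemma to $x^+(\varphi+\psi)x^-(\chi)$ and reading off the $D^+$-term inside $\bbd$, rather than appealing to a mirrored identity or $\pm$-symmetry.
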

\noindent\textit{Proof of corollary.} We will prove both
statements simultaneously by induction on $k$ according to the
scheme
$$(ii)^{k}\Rightarrow(i)^{k}\Rightarrow(ii)^{k+1},$$
where $(i)^{k}$ is the statement that $(i)_{\chi}$ holds for
all $\chi\in\f$ with $|\chi|\leq k$, and similarly for
$(ii)^{k}$. $(ii)^1$ is trivially true. For $(i)_0$ we have, by
induction on $|\psi|$,
$D^\pm(0,0,\psi)=x^\pm(\psi)\in\bu_\bbz(\lie g\otimes A)$.
Assume that $(i)^{k-1}$ and $(ii)^{k}$ hold for some $k\geq1$.
Note that $(i)_\chi$ is trivially true if $|\varphi|\neq|\chi|$
so assume that they have the same size. Then, by Lemma
\ref{basic}, we have for all $\varphi,\chi\in\f(k)$ and all
$\psi\in\f$
\begin{eqnarray*}
&&\hskip-.6inx^+(\varphi)x^-(\chi+\psi)\\
&=&\sum_{\substack{\psi_1,\psi_2,\phi_1,\phi_2\in\f\\ \psi_1+\psi_2\in\f(\chi+\psi)\\
\phi_1+\phi_2<\varphi\\ \psi_1+\psi_2\neq\chi}}
(-1)^{|\psi_1|+|\psi_2|}D^-(\phi_1,\psi_1,\chi+\psi-\psi_1-\psi_2)\bbd(\phi_2,\psi_2,\varphi-\phi_1-\phi_2)\\
&+&\sum_{\substack{\psi_1,\psi_2,\phi_1,\phi_2\in\f\\ \psi_1+\psi_2=\chi\\ \phi_1+\phi_2=\varphi\\ \phi_1<\varphi}}
(-1)^{|\chi|}D^-(\phi_1,\psi_1,\psi)p(\phi_2,\psi_2)+(-1)^{|\chi|}D^-(\varphi,\chi,\psi)
\end{eqnarray*}
The left side is clearly in $\bu_\bbz(\lie{sl}_2\otimes A)$.
The end sums are in $\bu_\bbz(\lie{sl}_2\otimes A)$ by
$(i)^{k-1}$ and $(ii)^k$. Thus
$D^-(\varphi,\chi,\psi)\in\bu_\bbz(\lie{sl}_2\otimes A)$. Also
by Lemma \ref{basic}

\begin{eqnarray*}
&&\hskip-.6in x^+(\varphi+\psi)x^-(\chi)\\
&=&\sum_{\substack{\psi_1,\psi_2,\phi_1,\phi_2\in\f\\ \psi_1+\psi_2<\chi\\
\phi_1+\phi_2\in\f(\varphi+\psi)\\ \phi_1+\phi_2\neq\varphi}}
(-1)^{|\psi_1|+|\psi_2|}D^-(\phi_1,\psi_1,\chi-\psi_1-\psi_2)\bbd(\phi_2,\psi_2,\varphi+\psi-\phi_1-\phi_2)\\
&+&\sum_{\substack{\phi\in\f(\chi)-\{0\}\\ \phi'\in\f(\varphi)-\{0\}}}
(-1)^{|\varphi|}p(\phi,\phi')D^+(\chi-\phi,\varphi-\phi',\psi)+(-1)^{|\varphi|}D^+(\chi,\varphi,\psi)
\end{eqnarray*}

Again the left side is clearly in $\bu_\bbz(\lie{sl}_2\otimes
A)$. The end sums are in $\bu_\bbz(\lie{sl}_2\otimes A)$ by
$(i)^{k-1}$ and $(ii)^k$. Thus
$D^\pm(\varphi,\chi,\psi)\in\bu_\bbz(\lie{sl}_2\otimes A)$.
Applying $\Omega_\alpha$ we get $(i)^k$. Thus $(ii)^k$ implies
$(i)^k$. Assume that $(i)^{k}$ and $(ii)^k$ hold and let
$\varphi,\chi\in\f(k+1)$ be given. Then, again by the previous
lemma
\begin{eqnarray*}
x^+(\varphi)x^-(\chi)
&=&\sum_{\substack{\psi_1,\psi_2,\phi_1,\phi_2\in\f\\ \psi_1+\psi_2<\chi\\ \phi_1+\phi_2<\varphi}}
(-1)^{|\psi_1|+|\psi_2|}D^-(\phi_1,\psi_1,\chi-\psi_1-\psi_2)\bbd(\phi_2,\psi_2,\varphi-\phi_1-\phi_2)\\
&+&(-1)^{|\chi|}p(\varphi,\chi)
\end{eqnarray*}
Again the left side is clearly in $\bu_\bbz(\lie{sl}_2\otimes
A)$. The end sum is in $\bu_\bbz(\lie{sl}_2\otimes A)$ by
$(i)^{k}$ and $(ii)^k$. Thus
$p(\varphi,\chi)\in\bu_\bbz(\lie{sl}_2\otimes A)$. Applying
$\Omega_\alpha$ we get $(ii)^{k+1}$. Thus $(i)^k$ implies
$(ii)^{k+1}$.\hskip.5in$\square$

\section{$\mathcal{B}_0$ is a $\bbz$-basis for $\bu_\bbz^0(\lie g\otimes A)$}

Define $\bu_\bbz^0(\lie g\otimes A)$ to be the
$\bbz$-subalgebra of $\bu_\bbz(\lie g\otimes A)$ generated by
the set $\{p_i(\chi):\chi\in\f(\bb),\ i\in I\}$.

For the remainder of this section $\lie g=\lie{sl}_2$ unless
stated otherwise.

We adapt Garland's proof of the corresponding fact in \cite{G},
section 9.

\subsection{$\bu(h\otimes A)=\bbc-\span\mathcal{B}_0$}

\begin{prop}\label{degq}
For all $\chi\in\f$
$$p(\chi)=(-1)^{|\chi|}\prod_{a\in A}(h\otimes a)^{(\chi(a))}+\textnormal{ elments of }\bu(h\otimes A)
\textnormal{ of degree less than }|\chi|$$
\end{prop}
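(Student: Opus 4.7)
The plan is to induct on $|\chi|$. The base case $|\chi|=0$ is immediate since $p(0)=p(0,0)=1$ equals the empty product.

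For the inductive step, I would first simplify the defining recursion when it is specialized to $p(\chi)=p(\chi,|\chi|\chi_1)$. The Kronecker delta forces $|\psi_2|=|\psi_1|$, and the only $\psi_2\in\f(|\chi|\chi_1)$ of size $k$ is $k\chi_1$, for which $\m(k\chi_1)=1$ and $\pi(k\chi_1)=1$. Moreover $p(\chi-\psi_1,(|\chi|-|\psi_1|)\chi_1)$ is exactly $p(\chi-\psi_1)$ by definition. So the recursion collapses to
\begin{equation*}
p(\chi)\;=\;-\frac{1}{|\chi|}\sum_{\psi\in\f(\chi)-\{0\}}\m(\psi)\,(h\otimes\pi(\psi))\,p(\chi-\psi).
\end{equation*}

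Next, I would use that $\lie h$ is abelian, hence $\lie h\otimes A$ is abelian and $\bu(\lie h\otimes A)$ is commutative. Each summand has degree at most $1+\deg p(\chi-\psi)\leq 1+|\chi|-|\psi|$ by the inductive hypothesis. Summands with $|\psi|\geq 2$ therefore contribute only to degree $\leq|\chi|-1$, and in the summands with $|\psi|=1$ the lower-degree part of $p(\chi-\psi)$ likewise contributes only to degree $\leq|\chi|-1$. So the only contribution to degree $|\chi|$ comes from pairing $\psi=\chi_a$ (with $\chi(a)>0$) against the leading term of $p(\chi-\chi_a)$ provided by induction.

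Computing this contribution using commutativity and the identity $(h\otimes a)(h\otimes a)^{(\chi(a)-1)}=\chi(a)(h\otimes a)^{(\chi(a))}$, the degree-$|\chi|$ part of $p(\chi)$ equals
\begin{equation*}
-\frac{1}{|\chi|}\sum_{a:\chi(a)>0}(h\otimes a)\cdot(-1)^{|\chi|-1}\prod_{b\in A}(h\otimes b)^{((\chi-\chi_a)(b))}
\;=\;(-1)^{|\chi|}\,\frac{1}{|\chi|}\Bigl(\sum_a\chi(a)\Bigr)\prod_{b\in A}(h\otimes b)^{(\chi(b))},
\end{equation*}
and $\sum_a\chi(a)=|\chi|$ cancels the prefactor to give the desired leading term $(-1)^{|\chi|}\prod_b(h\otimes b)^{(\chi(b))}$.

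There is no real obstacle: the argument is purely bookkeeping with the recursion. The only point that needs mild care is verifying that the specialized recursion correctly reindexes (ensuring the $\psi_2=k\chi_1$ collapse and the identification $p(\chi-\psi,(|\chi|-|\psi|)\chi_1)=p(\chi-\psi)$) and that the degree bound from the inductive hypothesis is applied uniformly to the entire sum, not just term-by-term, so that all lower-order contributions are genuinely absorbed into the error.
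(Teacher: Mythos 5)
Your proof is correct: the collapse of the defining recursion to $p(\chi)=-\frac{1}{|\chi|}\sum_{\psi\in\f(\chi)-\{0\}}\m(\psi)(h\otimes\pi(\psi))p(\chi-\psi)$ is valid (the only minor imprecision is that it is the recursive vanishing $p(\varphi',\chi')=0$ for $|\varphi'|\neq|\chi'|$, i.e.\ Remark (a), rather than the front Kronecker delta itself, that kills the terms with $|\psi_1|\neq|\psi_2|$), and the degree bookkeeping plus the identity $(h\otimes a)(h\otimes a)^{(\chi(a)-1)}=\chi(a)(h\otimes a)^{(\chi(a))}$ in the commutative algebra $\bu(\lie h\otimes A)$ correctly isolates the leading term. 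The paper states this proposition without proof (deferring to \cite{Ch}), and your induction on $|\chi|$ is exactly the expected argument.
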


\begin{lem}
$$\bu(h\otimes A)=\bbc-\span\mathcal{B}_0$$
\end{lem}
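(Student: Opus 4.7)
The plan is to deduce this lemma from Proposition \ref{degq} by a standard leading-term/triangularity argument with respect to the degree filtration on $\bu(h\otimes A)$.

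First I would identify a convenient $\bbc$-basis of $\bu(h\otimes A)$ to compare $\mathcal{B}_0$ against. Since $h\otimes A$ is an abelian Lie algebra, $\bu(h\otimes A)$ is just the symmetric (equivalently polynomial) algebra on the vector space $h\otimes A$, and the basis $\bb$ of $A$ gives a $\bbc$-basis $\{h\otimes b : b\in\bb\}$ of $h\otimes A$. Therefore the divided-power monomials
\[
M(\chi)\ :=\ \prod_{b\in\bb}(h\otimes b)^{(\chi(b))},\qquad \chi\in\f(\bb),
\]
form a $\bbc$-basis of $\bu(h\otimes A)$, and $\deg M(\chi)=|\chi|$.

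Next I would read Proposition \ref{degq} as the statement that, in terms of this basis,
\[
p(\chi)\ =\ (-1)^{|\chi|}M(\chi)\ +\ (\text{elements of }\bu(h\otimes A)\text{ of degree }<|\chi|).
\]
The content of the lemma is then a straightforward induction on degree: I would show by induction on $k\ge 0$ that every $M(\chi)$ with $|\chi|\le k$ lies in the $\bbc$-span of $\mathcal{B}_0$. The base case $k=0$ is $M(0)=1=p(0)\in\mathcal{B}_0$. For the inductive step, given $\chi$ with $|\chi|=k$, Proposition \ref{degq} rearranges to
\[
M(\chi)\ =\ (-1)^{|\chi|}p(\chi)\ -\ (-1)^{|\chi|}\bigl(p(\chi)-(-1)^{|\chi|}M(\chi)\bigr),
\]
where the bracketed correction term has degree $<k$ and so, being an element of $\bu(h\otimes A)$, is a $\bbc$-linear combination of $M(\psi)$'s with $|\psi|<k$, which by the inductive hypothesis lie in $\bbc\text{-span}\,\mathcal{B}_0$. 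Hence $M(\chi)\in\bbc\text{-span}\,\mathcal{B}_0$, completing the induction.

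Since the $M(\chi)$ span $\bu(h\otimes A)$, this gives $\bu(h\otimes A)\subseteq\bbc\text{-span}\,\mathcal{B}_0$, and the reverse inclusion is automatic from the definition of $p(\chi)$. The only real obstacle is Proposition \ref{degq} itself, which has already been granted; once that is in hand the present lemma is essentially a bookkeeping consequence of unitriangularity of the change of basis from $\{M(\chi)\}$ to $\{p(\chi)\}$ along the degree filtration.
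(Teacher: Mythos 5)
Your proposal is correct and is essentially the paper's own argument: the easy inclusion $\bbc\textnormal{-span}\,\mathcal{B}_0\subseteq\bu(h\otimes A)$ is immediate, and the reverse inclusion is obtained by showing each divided-power monomial $\prod_b(h\otimes b)^{(\chi(b))}$ lies in the span via Proposition \ref{degq} and induction on $|\chi|$. Your write-up just makes the unitriangularity bookkeeping explicit where the paper leaves it to the reader.
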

\begin{proof}
Clearly $\bu(h\otimes A)\subset\bbc-\span\mathcal{B}_0$. For
the other inclusion it suffices to show that
$$\prod_{a\in A}(h\otimes
a)^{(\chi(a))}\in\bbc-\span\mathcal{B}_0$$ for all $\chi\in\f$.
This can be shown by Proposition \ref{degq} and induction on
$|\chi|$.
\end{proof}

\subsection{The Adjoint Action on the $\bbz$-Span of the Chevalley Basis}

For this subsection let $\lie g$ be arbitrary. Define $(\lie
g\otimes A)_\bbz$ to be the $\bbz$-span of the set
$S=\{(x^\pm_{\alpha}\otimes b),\ (h_i\otimes b):b\in\bb,\ i\in
I,\ \alpha\in R^+\}$.

\begin{lem}
For all $\alpha\in R^+$, $r\in\bbz_{\geq0}$ and $b\in\bb$,
$$\ad\left(x^\pm_{\alpha}\otimes b\right)^{(r)}((\lie g\otimes
A)_\bbz)\subset(\lie g\otimes A)_\bbz$$
\end{lem}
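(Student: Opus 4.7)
The plan is to reduce the statement to classical Chevalley integrality inside $\lie{g}$ itself. Since $(\lie g\otimes A)_\bbz$ is by definition the $\bbz$-span of $S=\{(x^\pm_\beta\otimes c),(h_i\otimes c):\beta\in R^+,\ i\in I,\ c\in\bb\}$, and $\ad(x^\pm_\alpha\otimes b)^{(r)}$ is $\bbz$-linear, it suffices to verify the inclusion on each generator $y\otimes c$, where $y$ is a Chevalley basis vector of $\lie g$ and $c\in\bb$.

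The first step is to exploit the commutativity of $A$. A one-line induction on $r$ shows
$$\bigl(\ad(x^\pm_\alpha\otimes b)\bigr)^{r}(y\otimes c)=\bigl((\ad x^\pm_\alpha)^r(y)\bigr)\otimes b^r c,$$
because at each iteration the new bracket $[x^\pm_\alpha\otimes b,\ \cdot\otimes\cdot]$ simply peels off one more $\ad x^\pm_\alpha$ on the Lie factor and multiplies the $A$-factor by $b$. Dividing by $r!$ yields
$$\ad(x^\pm_\alpha\otimes b)^{(r)}(y\otimes c)=\left(\tfrac{(\ad x^\pm_\alpha)^r}{r!}(y)\right)\otimes b^r c.$$

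The second step is the classical Chevalley integrality result: because $\{x^\pm_\beta,h_i\}$ is a Chevalley basis of the finite-dimensional simple Lie algebra $\lie g$, the divided power $(\ad x^\pm_\alpha)^r/r!$ preserves the $\bbz$-lattice spanned by $\{x^\pm_\beta,h_i\}$. The combinatorial input is that each $\alpha$-root string is finite and carries an $\lie{sl}_2$-module structure under which divided powers of $\ad x^\pm_\alpha$ act by integer matrices in the Chevalley basis (and on Cartan elements the operator vanishes past $r=1$). Applied to our $y$, this writes $(\ad x^\pm_\alpha)^r(y)/r!$ as an integer linear combination of Chevalley basis vectors.

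To finish, $b^r c\in\bb$ since $\bb$ is closed under multiplication, so the right-hand side is a $\bbz$-linear combination of elements of $S$, hence lies in $(\lie g\otimes A)_\bbz$. The main (and essentially only) substantive ingredient is the classical Chevalley integrality of divided powers of $\ad$ acting on $\lie g$; the commutativity of $A$ and the multiplicative closure of $\bb$ do all the remaining bookkeeping.
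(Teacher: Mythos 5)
Your proof is correct. The paper actually states this lemma with no proof at all (the surrounding section only remarks that it adapts Garland's argument), and your reduction --- the identity $\bigl(\ad(x^\pm_\alpha\otimes b)\bigr)^r(y\otimes c)=\bigl((\ad x^\pm_\alpha)^r(y)\bigr)\otimes b^rc$ from commutativity of $A$, classical Chevalley integrality of $(\ad x^\pm_\alpha)^r/r!$ on the $\bbz$-span of the Chevalley basis, and closure of $\bb$ under multiplication --- is exactly the standard argument being invoked.
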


\begin{cor}
For all $C\in\bu_\bbz(\lie g\otimes A)$, $\ad C((\lie g\otimes
A)_\bbz)\subset(\lie g\otimes A)_\bbz$.
\end{cor}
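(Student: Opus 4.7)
The plan is to deduce the corollary from the preceding lemma by the standard principle that the property ``preserves $(\lie g\otimes A)_\bbz$ under $\ad$'' is closed under the operations that generate $\bu_\bbz(\lie g\otimes A)$ from its stated generating set.

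First I would recall that the adjoint representation $\ad:\lie g\otimes A\to\End_\bbc(\lie g\otimes A)$ is a Lie algebra homomorphism, so by the universal property of the enveloping algebra it extends uniquely to a \emph{ring} homomorphism $\ad:\bu(\lie g\otimes A)\to\End_\bbc(\lie g\otimes A)$. Consequently, for all $C,C'\in\bu(\lie g\otimes A)$,
\begin{equation*}
\ad(C+C')=\ad(C)+\ad(C'),\qquad \ad(CC')=\ad(C)\circ\ad(C').
\end{equation*}

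Next I would introduce the set
\begin{equation*}
T=\{C\in\bu(\lie g\otimes A):\ad(C)((\lie g\otimes A)_\bbz)\subset(\lie g\otimes A)_\bbz\}.
\end{equation*}
The two displayed identities above show that $T$ is a $\bbz$-subalgebra of $\bu(\lie g\otimes A)$: it is closed under addition and under multiplication, and obviously contains $1$ and all of $\bbz$.

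The preceding lemma asserts precisely that every generator $(x_\alpha^\pm\otimes b)^{(r)}$ of $\bu_\bbz(\lie g\otimes A)$ lies in $T$. Since $\bu_\bbz(\lie g\otimes A)$ is by definition the $\bbz$-subalgebra generated by these divided powers, it is contained in the $\bbz$-subalgebra $T$, which is exactly the claim of the corollary. I do not anticipate any real obstacle here; the only subtlety worth explicitly noting is the passage from the Lie-algebra action to the $\bbz$-algebra homomorphism $\ad$ on $\bu(\lie g\otimes A)$, which turns the generation of $\bu_\bbz(\lie g\otimes A)$ as a $\bbz$-algebra into the required closure property.
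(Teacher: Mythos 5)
Your proof is correct and is exactly the argument the paper intends: the corollary is stated there without proof as an immediate consequence of the lemma, the implicit reasoning being that $\ad$ extends to an algebra homomorphism on $\bu(\lie g\otimes A)$, so the elements preserving $(\lie g\otimes A)_\bbz$ form a $\bbz$-subalgebra containing the generators $(x_\alpha^\pm\otimes b)^{(r)}$ of $\bu_\bbz(\lie g\otimes A)$. Your write-up just makes this standard closure argument explicit.
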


\subsection{The Adjoint Action on $((\lie g\otimes A)_\bbz)^{\otimes r}$}

We note the following simple lemma.

\begin{lem}
Let $V,W$ be $\lie g\otimes A$-modules, with respective
additive subgroups $M,N$. If $M,N$ are preserved by
$\left(x^\pm_{\alpha}\otimes b\right)^{(r)}$, for all
$\alpha\in R^+$, $b\in\bb$ and $r\in\bbz_{\geq0}$, then
$M\otimes N\subset V\otimes W$ is also preserved by these
elements.
\end{lem}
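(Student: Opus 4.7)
The plan is to reduce the claim to the standard ``Leibniz rule for divided powers'' applied to the tensor product action. Recall that for modules $V, W$ over a Lie algebra, any element $y$ of the Lie algebra acts on $V\otimes W$ as a derivation, i.e.\ $y(v\otimes w) = y(v)\otimes w + v\otimes y(w)$. Setting $y = x_\alpha^\pm\otimes b$, I would like to prove the divided-power version
\[
(x_\alpha^\pm\otimes b)^{(r)}(v\otimes w) = \sum_{i+j=r}(x_\alpha^\pm\otimes b)^{(i)}(v)\otimes (x_\alpha^\pm\otimes b)^{(j)}(w).
\]

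To establish this, first I would prove by induction on $r$ (using the derivation property and the standard Pascal identity $\binom{r}{i} + \binom{r}{i-1} = \binom{r+1}{i}$) the non-divided binomial formula
\[
y^r(v\otimes w) = \sum_{i+j=r}\binom{r}{i}y^i(v)\otimes y^j(w),
\]
valid for any $y\in\lie g\otimes A$ and any $v\in V$, $w\in W$. Dividing both sides by $r!$ and observing that $\binom{r}{i}/r! = 1/(i!\,j!)$ immediately gives the divided-power formula displayed above.

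With this identity in hand, the lemma is immediate: for each pair $(i,j)$ with $i+j=r$, the hypothesis says $(x_\alpha^\pm\otimes b)^{(i)}$ preserves $M$ and $(x_\alpha^\pm\otimes b)^{(j)}$ preserves $N$, so each summand $(x_\alpha^\pm\otimes b)^{(i)}(v)\otimes (x_\alpha^\pm\otimes b)^{(j)}(w)$ lies in $M\otimes N$ whenever $v\in M$ and $w\in N$. Hence the whole sum, which equals $(x_\alpha^\pm\otimes b)^{(r)}(v\otimes w)$, lies in $M\otimes N$, and by additivity $(x_\alpha^\pm\otimes b)^{(r)}$ preserves $M\otimes N$. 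There is no real obstacle here; the only mildly delicate point is making sure the binomial-for-derivations induction is phrased so that the division by $r!$ is purely formal and does not require denominators in $M$ or $N$.
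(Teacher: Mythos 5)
Your argument is correct and is exactly the standard one the paper has in mind (the paper states this lemma without proof, calling it simple): the action of $y=x_\alpha^\pm\otimes b$ on $V\otimes W$ is $y\otimes 1+1\otimes y$ with commuting summands, so the binomial theorem gives $y^{(r)}(v\otimes w)=\sum_{i+j=r}y^{(i)}(v)\otimes y^{(j)}(w)$ with integer (indeed unit) coefficients, and each summand lies in $M\otimes N$ by hypothesis. No gaps; your closing remark correctly identifies that the division by $r!$ is absorbed into the divided powers acting on $V$ and $W$ separately, so no denominators ever touch $M$ or $N$.
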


Let $\ad^{(r)}$ denote the action of $\lie g\otimes A$ (and of
$\bu(\lie g\otimes A)$) induced by adjoint action, on $(\lie
g\otimes A)^{\otimes r}$. Then we have the following corollary
to the previous two lemmas.

\begin{cor}\label{adQ}
For all $Q\in\bu_\bbz(\lie g\otimes A)$ and all
$r\in\bbz_{\geq0}$,
$$\ad^{(r)}Q\left(\left((\lie g\otimes A)_\bbz\right)^{\otimes r}\right)
\subset\left((\lie g\otimes A)_\bbz\right)^{\otimes r}$$
\end{cor}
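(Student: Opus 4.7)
The plan is a short two-stage argument combining the preceding tensor Lemma with the earlier Corollary that $\ad(C)$ preserves $(\lie g\otimes A)_\bbz$ for every $C\in\bu_\bbz(\lie g\otimes A)$. The key structural observation is that, for each fixed $r$, the collection
$$R_r = \{Q\in\bu(\lie g\otimes A) : \ad^{(r)}(Q)\ \text{preserves}\ ((\lie g\otimes A)_\bbz)^{\otimes r}\}$$
is automatically a $\bbz$-subalgebra of $\bu(\lie g\otimes A)$, because $\ad^{(r)}$ is a $\bbc$-algebra homomorphism into $\End((\lie g\otimes A)^{\otimes r})$ and the operators preserving a given additive subgroup form a subring under composition and sum. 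It therefore suffices to check that every algebra generator $(x_\alpha^\pm\otimes b)^{(s)}$ of $\bu_\bbz(\lie g\otimes A)$ lies in $R_r$.

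To establish this generator claim I would induct on $r$. The case $r=1$ is exactly the earlier Corollary applied to $C=(x_\alpha^\pm\otimes b)^{(s)}$. For the inductive step I would invoke the tensor Lemma immediately preceding this corollary, with $V=(\lie g\otimes A)^{\otimes(r-1)}$, $W=\lie g\otimes A$, $M=((\lie g\otimes A)_\bbz)^{\otimes(r-1)}$, and $N=(\lie g\otimes A)_\bbz$. The inductive hypothesis guarantees that $M$ is stable under every divided power $(x_\alpha^\pm\otimes b)^{(s)}$ simultaneously, and the base case gives the same for $N$; the tensor Lemma then outputs the stability of $M\otimes N=((\lie g\otimes A)_\bbz)^{\otimes r}$ under all these divided powers, which is precisely the generator claim at level $r$. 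Combining with the first paragraph, $R_r$ contains every generator of $\bu_\bbz(\lie g\otimes A)$ and hence all of $\bu_\bbz(\lie g\otimes A)$, which is the statement of the corollary.

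The only subtlety, rather than a genuine obstacle, is that the tensor Lemma demands preservation by \emph{all} divided-power generators simultaneously. The induction must therefore propagate stability uniformly over $(\alpha,b,s)$, not generator-by-generator; once that quantification is set up correctly, the rest is purely formal and no new combinatorial identity beyond those already recorded is required.
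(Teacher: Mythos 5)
Your proposal is correct and follows essentially the same route the paper intends: the corollary is stated as an immediate consequence of the two preceding lemmas, obtained exactly as you describe by inducting on $r$ with the tensor lemma (applied to $M=((\lie g\otimes A)_\bbz)^{\otimes(r-1)}$ and $N=(\lie g\otimes A)_\bbz$) and then passing from the divided-power generators to all of $\bu_\bbz(\lie g\otimes A)$ via the observation that the elements acting stably on an additive subgroup form a $\bbz$-subalgebra. Your remark that the stability must be carried uniformly over all $(\alpha,b,s)$ simultaneously is precisely the point of the hypothesis in the tensor lemma, so no gap remains.
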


\subsection{$p(\chi)p(\chi')\in\bbz-\span\mathcal{B}_0$}

\begin{lem}
For all $\chi,\chi'\in\f(\bb)$,
$p(\chi)p(\chi')\in\bbz-\span\mathcal{B}_0$
\end{lem}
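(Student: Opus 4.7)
The plan is to prove this by induction on $N := |\chi| + |\chi'|$, with trivial base case $p(0)p(0) = 1 = p(0) \in \mathcal{B}_0$.

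For the inductive step, Proposition \ref{degq} gives
$$p(\chi) = (-1)^{|\chi|}\prod_{a \in A}(h\otimes a)^{(\chi(a))} + L(\chi)$$
with $\deg L(\chi) < |\chi|$, and analogously for $p(\chi')$. Since $\bu(h\otimes A)$ is commutative and $(h\otimes a)^{(r)}(h\otimes a)^{(s)} = \binom{r+s}{r}(h\otimes a)^{(r+s)}$, the top-degree part of $p(\chi)p(\chi')$ equals $(-1)^{|\chi|+|\chi'|}\prod_a\binom{\chi(a)+\chi'(a)}{\chi(a)}(h\otimes a)^{(\chi(a)+\chi'(a))}$, which by Proposition \ref{degq} matches the top-degree part of $\binom{\chi+\chi'}{\chi}\,p(\chi+\chi')$, where $\binom{\chi+\chi'}{\chi} := \prod_a\binom{\chi(a)+\chi'(a)}{\chi(a)} \in \bbz$. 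Therefore
$$R := p(\chi)p(\chi') - \binom{\chi+\chi'}{\chi}p(\chi+\chi') \in \bu(h\otimes A)$$
satisfies $\deg R < N$. By Corollary \ref{qinuz}, each of $p(\chi), p(\chi'), p(\chi+\chi')$ lies in $\bu_\bbz(\lie{sl}_2\otimes A)$, so $R$ lies in $\bu_\bbz(\lie{sl}_2\otimes A) \cap \bu(h\otimes A)$.

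The main obstacle is showing that this residue $R$ itself lies in the $\bbz$-span of $\mathcal{B}_0$. To handle this I would strengthen the induction hypothesis, proving simultaneously that every element of $\bu_\bbz(\lie{sl}_2\otimes A) \cap \bu(h\otimes A)$ of degree at most $N-1$ lies in the $\bbz$-span of $\{p(\psi) : |\psi| \leq N-1\}$. The triangular structure of Proposition \ref{degq} (the change of basis between $\{p(\psi)\}$ and the monomial basis $\{\prod_a (h\otimes a)^{(\psi(a))}\}$ of $\bu(h\otimes A)$ is lower-triangular in degree with $\pm 1$ on the diagonal) reduces the identification of $p$-coefficients to the identification of monomial coefficients; the integrality of the latter follows from analyzing weight-zero $\bbz$-linear combinations of PBW-ordered products of the Chevalley generators $(x^\pm_\alpha\otimes b)^{(r)}$ via the straightening Lemma \ref{basic} together with Corollary \ref{qinuz}. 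Matching top-degree coefficients fixes an integer contribution at degree $\deg R$, and the remainder has strictly lower degree, closing the induction.
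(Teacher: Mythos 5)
Your reduction step is fine and in fact reproduces the Corollary that the paper derives \emph{from} this lemma: using Proposition \ref{degq} and the divided-power identity $(h\otimes a)^{(r)}(h\otimes a)^{(s)}=\binom{r+s}{r}(h\otimes a)^{(r+s)}$, the element $R=p(\chi)p(\chi')-\prod_a\binom{(\chi+\chi')(a)}{\chi(a)}\,p(\chi+\chi')$ does lie in $\bu(h\otimes A)$ with degree strictly less than $|\chi|+|\chi'|$, and Corollary \ref{qinuz} does place it in $\bu_\bbz(\lie{sl}_2\otimes A)$. The problem is the step you yourself flag as the main obstacle. Your strengthened induction hypothesis --- that every element of $\bu_\bbz(\lie{sl}_2\otimes A)\cap\bu(h\otimes A)$ of degree at most $N-1$ lies in the $\bbz$-span of the $p(\psi)$ --- is essentially the statement that $\bu_\bbz\cap\bu(\lie h\otimes A)=\bu^0_\bbz$ with $\mathcal{B}_0$ as a $\bbz$-basis, which is a consequence of the main theorem (triangular decomposition plus linear independence), not an input to it. The justification you offer for it (``analyzing weight-zero $\bbz$-linear combinations of PBW-ordered products of the $(x^\pm_\alpha\otimes b)^{(r)}$ via Lemma \ref{basic} and Corollary \ref{qinuz}'') does not go through as stated: a general element of $\bu_\bbz$ is a $\bbz$-combination of arbitrary products of divided powers, straightening these with Lemma \ref{basic} produces sums of terms $D^-(\cdot)\,p(\cdot,\cdot)\,D^+(\cdot)$ in which weight-zero contributions with nontrivial $D^\pm$ factors occur and can cancel, and isolating the $\bu(h\otimes A)$-component with \emph{integral} coordinates in the $p$-basis presupposes exactly the $\bbz$-linear independence being established. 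So the argument is circular at the decisive point, and no mechanism is given that actually forces the coefficients to be integers.

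The paper (following Garland's Lemma 9.2) supplies that mechanism through the two subsections you did not use: the lattice $(\lie g\otimes A)_\bbz$ spanned by the Chevalley-type basis is preserved by $\ad\left(x^\pm_\alpha\otimes b\right)^{(r)}$, hence by all of $\bu_\bbz(\lie g\otimes A)$, and Corollary \ref{adQ} extends this to the tensor powers $\left((\lie g\otimes A)_\bbz\right)^{\otimes r}$. Letting $p(\chi)p(\chi')$ act via $\ad^{(r)}$ on suitably chosen lattice vectors, one reads off the coefficients of its expansion in the $\bbc$-basis $\mathcal{B}_0$ of $\bu(h\otimes A)$ as matrix entries of a lattice-preserving operator, which are therefore integers. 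That adjoint-action-on-the-lattice argument is the missing idea; without it (or some substitute of comparable strength) your induction does not close.
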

The proof is similar to that found in \cite{G} Lemma 9.2.

The following corollary follows directly
\begin{cor}
$$p(\chi)p(\chi')-\prod_{a\in
A}\binom{(\chi+\chi')(a)}{\chi(a)}p(\chi+\chi')\in\bbz-\span\mathcal{B}_0$$
\end{cor}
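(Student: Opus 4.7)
The plan is to combine the preceding lemma, which already gives $p(\chi)p(\chi')\in\bbz\text{-span}\,\mathcal{B}_0$, with the leading-term information in Proposition \ref{degq}. Written this way, the corollary is essentially a bookkeeping statement at the top degree.

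First I would note the ``trivial half'': $p(\chi+\chi')$ is itself an element of $\mathcal{B}_0$ (it is $f^0(\chi+\chi')$ in the single-factor $\lie{sl}_2$ setting that governs this subsection), so any integer multiple of $p(\chi+\chi')$ lies in $\bbz\text{-span}\,\mathcal{B}_0$. The binomial product $\prod_{a\in A}\binom{(\chi+\chi')(a)}{\chi(a)}$ is a finite product of ordinary binomial coefficients (only finitely many factors differ from $1$ because $\chi,\chi'$ have finite support), hence is an integer. By the preceding lemma $p(\chi)p(\chi')\in\bbz\text{-span}\,\mathcal{B}_0$, and subtracting an integer multiple of the basis element $p(\chi+\chi')$ keeps us inside that $\bbz$-span. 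This already establishes the statement as written.

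For the ``content'' of the corollary, I would then isolate what the coefficient $\prod_{a\in A}\binom{(\chi+\chi')(a)}{\chi(a)}$ actually records, namely the top-degree piece. By Proposition \ref{degq},
\[
p(\chi)=(-1)^{|\chi|}\prod_{a\in A}(h\otimes a)^{(\chi(a))}+(\text{terms in }\bu(h\otimes A)\text{ of degree }<|\chi|),
\]
and similarly for $p(\chi')$ and $p(\chi+\chi')$. Because the elements $h\otimes a$ commute, the divided-power identity $u^{(r)}u^{(s)}=\binom{r+s}{r}u^{(r+s)}$ applied factorwise gives
\[
\prod_{a\in A}(h\otimes a)^{(\chi(a))}\prod_{a\in A}(h\otimes a)^{(\chi'(a))}=\prod_{a\in A}\binom{(\chi+\chi')(a)}{\chi(a)}\prod_{a\in A}(h\otimes a)^{((\chi+\chi')(a))}.
\]
Using $|\chi+\chi'|=|\chi|+|\chi'|$, the signs line up and the top-degree piece of $p(\chi)p(\chi')$ equals exactly the top-degree piece of $\prod_{a\in A}\binom{(\chi+\chi')(a)}{\chi(a)}\,p(\chi+\chi')$; hence the difference has degree strictly less than $|\chi+\chi'|$. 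Combined with the lemma, it therefore lies in the $\bbz$-span of $\mathcal{B}_0$.

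I do not expect a real obstacle here: the substantive work was already done in the preceding lemma, and the only calculation is the factorwise divided-power identity together with matching leading terms. The degree-strict-drop in the last paragraph, although not in the statement, is the form in which the corollary will actually be used in any downstream induction on $|\chi+\chi'|$.
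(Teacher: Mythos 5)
Your proof is correct and matches the paper, which offers no argument beyond ``the following corollary follows directly'' from the lemma that $p(\chi)p(\chi')\in\bbz\text{-span}\,\mathcal{B}_0$: as you observe, the statement as written is immediate once one notes that the binomial product is an integer and that $p(\chi+\chi')$ is itself an element of $\mathcal{B}_0$. Your additional leading-term computation via Proposition \ref{degq} (giving the strict degree drop) is exactly the form in which the paper uses the corollary in the next lemma, so nothing is missing.
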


\subsection{$\mathcal{B}_0$ is a $\bbz$-basis for $\bu^0_\bbz(\lie g\otimes A)$}

\begin{lem}
$\mathcal{B}_0$ is a $\bbz$-basis for $\bu^0_\bbz(\lie g\otimes
A)$.
\end{lem}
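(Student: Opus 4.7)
The plan is to prove spanning and linear independence separately, leveraging the fact that $\lie h$ (hence $\lie h\otimes A$) is abelian so that $\bu(\lie h\otimes A)$ is commutative and the generators $p_i(\chi)$ all pairwise commute.

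For spanning, I would argue as follows. By definition, $\bu^0_\bbz(\lie g\otimes A)$ is the $\bbz$-span of all products $p_{i_1}(\chi^{(1)})p_{i_2}(\chi^{(2)})\cdots p_{i_N}(\chi^{(N)})$ with $i_j\in I$ and $\chi^{(j)}\in\f(\bb)$. Because the $p_i(\chi)$ pairwise commute, such a product can be reordered without changing its value into the form $\prod_{i\in I} \bigl(\prod_{j:\, i_j=i} p_i(\chi^{(j)})\bigr)$, with the outer product taken in the fixed order $1,2,\ldots,n$. Within each factor indexed by $i$, applying $\Omega_i$ to the corollary of the preceding lemma (which gives $p(\chi)p(\chi') \in \bbz\text{-}\span\mathcal{B}_0$ in the $\lie{sl}_2$ case) reduces each pair $p_i(\chi)p_i(\chi')$ to a $\bbz$-linear combination of single $p_i(\chi'')$'s. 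Iterating this reduction, the whole $i$-block collapses to a $\bbz$-combination of elements $p_i(\psi_i)$. Assembling the blocks gives the expression as a $\bbz$-combination of $f^0(\und{\psi}) = p_1(\psi_1)\cdots p_n(\psi_n)\in\mathcal{B}_0$.

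For linear independence, I would use \propref{degq}. Applying $\Omega_i$ to that proposition gives
\[
p_i(\psi_i) = (-1)^{|\psi_i|}\prod_{a\in A}(h_i\otimes a)^{(\psi_i(a))} + (\text{terms in }\bu(\lie h\otimes A)\text{ of degree }<|\psi_i|).
\]
Since all $p_i$'s commute and live in the commutative algebra $\bu(\lie h\otimes A)$, multiplying these expansions and collecting by total degree shows
\[
f^0(\und{\psi}) = (-1)^{|\psi_1|+\cdots+|\psi_n|}\prod_{i\in I}\prod_{a\in A}(h_i\otimes a)^{(\psi_i(a))} + (\text{strictly lower degree terms}).
\]
The leading monomials on the right are distinct PBW monomials in the ordered basis $\{h_i\otimes b : i\in I,\ b\in\bb\}$ of $\lie h\otimes A$, one for each tuple $\und{\psi}\in\f(\bb)^{\times n}$. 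By the PBW theorem these are $\bbc$-linearly independent in $\bu(\lie h\otimes A)$, so a standard triangularity-by-degree argument shows that any $\bbz$-linear dependence among the $f^0(\und{\psi})$ would force a dependence among their leading monomials, hence all coefficients vanish. This gives $\bbz$-linear independence.

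The step I expect to require the most care is the spanning reduction: one must verify that the commutation-and-combine procedure actually terminates inside $\bbz\text{-}\span\mathcal{B}_0$ and does not pick up denominators. The commutation is free (everything commutes), and the combination step is precisely what the previous corollary guarantees over $\bbz$, so the argument goes through, but one should record the induction (say on $N$, the number of factors) to make the termination explicit. The linear independence step, by contrast, is essentially a direct consequence of \propref{degq} plus PBW for $\bu(\lie h\otimes A)$, and presents no real difficulty.
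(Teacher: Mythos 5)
Your proof is correct and takes essentially the same approach as the paper: spanning via commutativity of the $p_i$'s together with repeated application of $\Omega_{\alpha_i}$ to the corollary $p(\chi)p(\chi')\in\bbz\textnormal{-span}\,\mathcal{B}_0$ (the paper organizes the reduction as an induction on the degree of the product, using \propref{degq} to see the degree drop, rather than on the number of factors, but this is cosmetic). The paper leaves linear independence implicit; your leading-term triangularity argument from \propref{degq} plus the PBW theorem for $\bu(\lie h\otimes A)$ is exactly the standard justification.
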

\begin{proof}
It will suffice to show that any product of elements of
$\{p_i(\chi):\chi\in\f(\bb)\}$ is in the $\bbz$-span of
$\bu^0_\bbz(\lie g\otimes A)$. To show this we will proceed by
induction on the degree of such a product. Since $p_i(\chi)$
and $p_j(\chi')$ commute it will suffice to apply
$\Omega_{\alpha_i}$ to the previous corollary because by
Proposition \ref{degq} $$p(\chi)p(\chi')-\prod_{a\in
A}\binom{(\chi+\chi')(a)}{\chi(a)}p(\chi+\chi')$$ has degree
less than $|\chi|+|\chi'|$.
\end{proof}

\section{More Identities}

We will state and prove some more necessary identities.

\subsection{Identities for $(x_\alpha^+\otimes b)p_i(\varphi,\chi)$ and $p_i(\varphi,\chi)(x_\alpha^-\otimes b)$}

\begin{prop}\label{x+pastq}
For all $\varphi,\chi\in\f$, $\alpha\in R^+$, $i\in I$ and
$b\in A$ with $\alpha(h_i)\neq0$ the
following hold:\\
\noindent$(i)$
$$(x_\alpha^+\otimes b)p_i(\varphi,\chi)
=\sum_{\substack{\psi_1\in\f(\varphi)\\ \psi_2\in\f(\chi)\\ |\psi_1|=|\psi_2|}}
\binom{\alpha(h_i)+|\psi_1|-1}{|\psi_1|}\m(\psi_1)\m(\psi_2)p_i(\varphi-\psi_1,\chi-\psi_2)
\left(x_\alpha^+\otimes b\pi(\psi_1)\pi(\psi_2)\right)$$
\noindent$(ii)$
$$p_i(\varphi,\chi)(x_\alpha^-\otimes b)
=\sum_{\substack{\psi_1\in\f(\varphi)\\ \psi_2\in\f(\chi)\\ |\psi_1|=|\psi_2|}}
\binom{\alpha(h_i)+|\psi_1|-1}{|\psi_1|}\m(\psi_1)\m(\psi_2)\left(x_\alpha^-\otimes b\pi(\psi_1)\pi(\psi_2)\right)
p_i(\varphi-\psi_1,\chi-\psi_2)$$
\end{prop}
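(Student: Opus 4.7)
The plan is to prove both formulas by induction on $k=|\varphi|$, assuming throughout that $|\varphi|=|\chi|=k$ (otherwise both sides vanish, since the $p_i$-factors force equal sizes). The base $k=0$ gives $p_i(0,0)=1$, and on both sides only $\psi_1=\psi_2=0$ contributes, yielding $x_\alpha^+\otimes b$ (resp.\ $x_\alpha^-\otimes b$).

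For the inductive step of $(i)$, I would multiply the defining recursion
\[
k\,p_i(\varphi,\chi) = -\sum_{\substack{\phi_1\in\f(\varphi)-\{0\}\\ \phi_2\in\f(\chi)-\{0\}}}\m(\phi_1)\m(\phi_2)\,(h_i\otimes\pi(\phi_1)\pi(\phi_2))\,p_i(\varphi-\phi_1,\chi-\phi_2)
\]
on the left by $x_\alpha^+\otimes b$ and commute past each $h_i$-factor using $[h_i\otimes a,\,x_\alpha^+\otimes b]=\alpha(h_i)(x_\alpha^+\otimes ab)$. This produces two sums: (A), in which $x_\alpha^+\otimes b$ now multiplies $p_i(\varphi-\phi_1,\chi-\phi_2)$ on the left; and (B), in which the commutator term $\alpha(h_i)(x_\alpha^+\otimes b\pi(\phi_1)\pi(\phi_2))$ multiplies the same $p_i(\varphi-\phi_1,\chi-\phi_2)$. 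The inductive hypothesis applies to each---in (A) with coefficient $b$, and in (B) with coefficient $b\pi(\phi_1)\pi(\phi_2)\in A$---moving $x_\alpha^+\otimes(\cdots)$ to the right of the shorter $p_i$.

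I would then reparametrize each summand by the pair $(\psi_1,\psi_2)$ recording the total shift of the outer $p_i$-argument. In (A), fixing $\psi_j=\psi_j'$ (the inductive shift) and summing the remaining inner $(\phi_1,\phi_2)$-factor recognizes the defining recursion for $p_i(\varphi-\psi_1,\chi-\psi_2)$ run in reverse, collapsing that inner sum to $-(k-|\psi_1|)\,p_i(\varphi-\psi_1,\chi-\psi_2)$. In (B), setting $\psi_j=\phi_j+\psi_j'$, using $\pi(\phi_j)\pi(\psi_j')=\pi(\psi_j)$, and applying the multivariable Vandermonde identity
\[
\sum_{\substack{\phi\le\psi\\ |\phi|=t}}\m(\phi)\m(\psi-\phi)=\m(\psi)
\]
together with the hockey-stick identity $\sum_{j=0}^{s-1}\binom{N+j-1}{j}=\binom{N+s-1}{s-1}$ (with $N=\alpha(h_i)$ and $s=|\psi_1|$) gives inner coefficient $\binom{N+s-1}{s-1}\m(\psi_1)\m(\psi_2)$. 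Combining the (A) and (B) contributions and using $N\binom{N+s-1}{s-1}=s\binom{N+s-1}{s}$ yields exactly the asserted coefficient $\binom{N+s-1}{s}\m(\psi_1)\m(\psi_2)$.

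Formula $(ii)$ follows by the same template, applied on the right: multiply the recursion on the right by $x_\alpha^-\otimes b$, use the inductive hypothesis to pass it through the shorter $p_i$-factor to the left, and commute past the outer $h_i\otimes\pi(\phi_1)\pi(\phi_2)$ via $[h_i\otimes a,\,x_\alpha^-\otimes b]=-\alpha(h_i)(x_\alpha^-\otimes ab)$. The resulting sign flip is absorbed by the same identity $N\binom{N+s-1}{s-1}=s\binom{N+s-1}{s}$, which is symmetric between the (A)- and (B)-contributions. The main obstacle is the double reparametrization: showing that after applying the inductive hypothesis the inner (A)-sum is visibly an instance of the $p_i$-recursion run backwards, while the inner (B)-sum reduces via the two combinatorial identities above.
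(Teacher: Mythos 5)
Your proposal is correct and follows essentially the same route as the paper, which proves the proposition by induction on $|\varphi|=|\chi|$ via the defining recursion for $p(\varphi,\chi)$ (deferring the computational details to the author's dissertation \cite{Ch}). The steps you identify all check out: the commutator $[h_i\otimes a,x_\alpha^\pm\otimes b]=\pm\alpha(h_i)(x_\alpha^\pm\otimes ab)$, the collapse of the inner (A)-sum back to $-(|\varphi|-|\psi_1|)\,p_i(\varphi-\psi_1,\chi-\psi_2)$ via the recursion, the multinomial splitting identity $\sum_{\phi\leq\psi,\,|\phi|=t}\m(\phi)\m(\psi-\phi)=\m(\psi)$, the hockey-stick summation, and the absorption identity $\alpha(h_i)\binom{\alpha(h_i)+s-1}{s-1}=s\binom{\alpha(h_i)+s-1}{s}$ (all valid as polynomial identities, hence also for $\alpha(h_i)<0$).
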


The proof proceeds by induction on $|\chi|=|\varphi|$. Details
can be found in \cite{Ch}.

\subsection{Identities for $(x_\alpha^+\otimes b)^{(r)}p_i(\chi)$
and $p_i(\chi)(x_\alpha^-\otimes b)^{(s)}$}

\noindent\textbf{Definition.} Given $\chi\in\f$ define
$$\cs(\chi)=\left\{\psi\in\f(\f):\sum_{\phi\in\f}\psi(\phi)\phi\leq\chi\right\}$$
and $\cs_k(\chi)=\cs(\chi)\cap\f_k(\f)$.

\begin{prop}\label{x+rpastq}
For all $\chi\in\f$, $\alpha\in R^+$, $i\in I$ and $b\in A$
with $\alpha(h_i)\neq0$ and $r,s\in\bbz_{\geq0}$ the following
hold:\\
\noindent$(i)$
\begin{eqnarray*}
&&\hskip-.5in(x_\alpha^+\otimes b)^{(r)}p_i(\chi)\\
&=&\sum_{\psi\in\cs_r(\chi)}p_i\left(\chi-\sum_{\phi\in\f}\psi(\phi)\phi\right)
\prod_{\phi\in\f}\left(\binom{\alpha(h_i)+|\phi|-1}{|\phi|}\m(\phi)
\left(x_\alpha^+\otimes b\pi(\phi)\right)\right)^{(\psi(\phi))}
\end{eqnarray*}
\noindent$(ii)$
\begin{eqnarray*}
&&\hskip-.5inp_i(\chi)(x_\alpha^-\otimes b)^{(r)}\\
&=&\sum_{\psi\in\cs_r(\chi)}\prod_{\phi\in\f}\left(\binom{\alpha(h_i)+|\phi|-1}{|\phi|}\m(\phi)
\left(x_\alpha^-\otimes b\pi(\phi)\right)\right)^{(\psi(\phi))}p_i\left(\chi-\sum_{\phi\in\f}\psi(\phi)\phi\right)
\end{eqnarray*}
\end{prop}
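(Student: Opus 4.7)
The plan is to prove both (i) and (ii) by induction on $r$, using Proposition~\ref{x+pastq} as the engine for the inductive step. The base case $r=0$ is immediate, since $\cs_0(\chi)=\{0\}$ and both sides reduce to $p_i(\chi)$. The argument for (i) and (ii) is formally identical up to the order of factors, so I would treat (i) in detail and then invoke symmetry.

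For the inductive step in (i), I write
$(x_\alpha^+\otimes b)^{(r+1)}p_i(\chi)=\tfrac{1}{r+1}(x_\alpha^+\otimes b)\bigl[(x_\alpha^+\otimes b)^{(r)}p_i(\chi)\bigr]$,
substitute the inductive hypothesis, and then apply Proposition~\ref{x+pastq}(i) to commute the leftmost $(x_\alpha^+\otimes b)$ past each factor $p_i\bigl(\chi-\sum_\phi\psi(\phi)\phi\bigr)$. Since the second argument of $p_i(\cdot)$ here is $|\chi-\sum_\phi\psi(\phi)\phi|\chi_1$, the auxiliary multiset $\psi_2$ in Proposition~\ref{x+pastq}(i) is forced to be $|\psi_1|\chi_1$, which has $\m(\psi_2)=1$ and $\pi(\psi_2)=1$; hence the commutation formula collapses to a single sum over $\phi\in\f(\chi-\sum\psi(\phi')\phi')$ with coefficient $\binom{\alpha(h_i)+|\phi|-1}{|\phi|}\m(\phi)$. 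Because all the elements $x_\alpha^+\otimes b\pi(\phi)$ lie in $\lie n^+\otimes A$ and hence commute pairwise, the resulting expression can be freely reordered so that each new $(x_\alpha^+\otimes b\pi(\phi))$ sits next to the existing divided power $\bigl[c_\phi(x_\alpha^+\otimes b\pi(\phi))\bigr]^{(\psi(\phi))}$, where $c_\phi=\binom{\alpha(h_i)+|\phi|-1}{|\phi|}\m(\phi)$.

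The key combinatorial identity is
$c_\phi\cdot\bigl[c_\phi X\bigr]^{(k)}\cdot X=(k+1)\bigl[c_\phi X\bigr]^{(k+1)}$,
so absorbing the new factor into the divided power produces a weight of $\psi'(\phi)$ when $\psi'=\psi+\chi_\phi$ (where $\chi_\phi$ denotes the delta function at $\phi\in\f$). Reindexing the double sum by $\psi'\in\cs_{r+1}(\chi)$ and $\phi\in\supp\psi'$, the total weight collected is $\sum_{\phi\in\supp\psi'}\psi'(\phi)=|\psi'|=r+1$, which cancels the prefactor $\tfrac{1}{r+1}$ and yields exactly the formula claimed for $r+1$. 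I would check that the condition $\sum_{\phi}\psi'(\phi)\phi\leq\chi$ is preserved automatically, since the new $\phi$ appears only where $\chi-\sum\psi(\phi')\phi'$ has room for it.

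The main obstacle is purely bookkeeping: keeping track of which $\phi$ indexes the "new" factor under reindexing, and verifying that the map $(\psi,\phi)\mapsto(\psi+\chi_\phi,\phi)$ is a $\psi'(\phi)$-to-one surjection from $\{(\psi,\phi):\psi\in\cs_r(\chi),\ \phi\in\f(\chi-\sum\psi(\phi')\phi')\}$ onto $\{(\psi',\phi):\psi'\in\cs_{r+1}(\chi),\ \phi\in\supp\psi'\}$. Once this combinatorial reindexing is in hand, the inductive step closes. Statement (ii) then follows by the same argument with the roles of left and right multiplication interchanged, using Proposition~\ref{x+pastq}(ii) in place of (i).
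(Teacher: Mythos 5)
Your proposal is correct and follows the same route the paper takes (and merely sketches, deferring details to the dissertation): induction on $r$ with Proposition~\ref{x+pastq} supplying both the base case and the commutation step, using the collapse $\psi_2=|\psi_1|\chi_1$ forced by $p_i(\chi)=p_i(\chi,|\chi|\chi_1)$. One small imprecision: the reindexing map $(\psi,\phi)\mapsto(\psi+\chi_\phi,\phi)$ is a bijection onto the set of pairs $(\psi',\phi)$ with $\psi'\in\cs_{r+1}(\chi)$ and $\phi\in\supp\psi'$, each pair carrying the coefficient $\psi'(\phi)$ from the divided-power absorption (it is not a $\psi'(\phi)$-to-one surjection), but your subsequent weight count $\sum_{\phi\in\supp\psi'}\psi'(\phi)=r+1$ is exactly right and closes the induction.
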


The proof proceeds by induction on $r$. The case $r=1$ is
Proposition \ref{x+pastq}. We also use Proposition
\ref{x+pastq} at the beginning of the proof. See \cite{Ch} for
details.

\section{A Triangular Decomposition of $\bu_\bbz(\lie g\otimes A)$}

This section is devoted to the proof of the following lemma.
\begin{lem}\label{tri}
$$\bu_\bbz(\lie g\otimes A)=\bu_\bbz^-(\lie g\otimes A)\bu^0_\bbz(\lie g\otimes
A)\bu^+_\bbz(\lie g\otimes A)$$
\end{lem}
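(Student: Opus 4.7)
It suffices to show that any product of generators $(x_\alpha^\pm\otimes b)^{(r)}$ of $\bu_\bbz(\lie g\otimes A)$ lies in $\bu_\bbz^-(\lie g\otimes A)\,\bu_\bbz^0(\lie g\otimes A)\,\bu_\bbz^+(\lie g\otimes A)$. Order the three sorts of generators by ``negative $<$ Cartan $<$ positive'' (where ``Cartan'' means a $p_i(\chi)$), and for a word $W=g_1\cdots g_n$ let $\deg(W)$ denote the total divided-power degree and $\iota(W)$ the number of inverted adjacent pairs with respect to this ordering. I induct on $(\deg(W),\iota(W))$ lexicographically. When $\iota(W)=0$ the word factors as a negative block times a Cartan block times a positive block, and these blocks lie in $\bu_\bbz^\pm$ by \corref{pmbasis} and in $\bu_\bbz^0$ by the basis lemma for $\mathcal{B}_0$ from the preceding section.

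\textbf{Inductive step.} When $\iota(W)>0$, locate the leftmost inverted adjacent pair $g_ig_{i+1}$ and rewrite it. If $g_i$ is Cartan and $g_{i+1}$ is of type $-$, or if $g_i$ is of type $+$ and $g_{i+1}$ is Cartan, then \propref{x+rpastq} provides an expansion into sorted form with unchanged total degree and strictly fewer inversions. If $g_i=(x_\alpha^+\otimes a)^{(r)}$ and $g_{i+1}=(x_\alpha^-\otimes b)^{(s)}$ share the same root $\alpha$, apply $\Omega_\alpha$ to the Straightening Lemma \ref{basic} to obtain an expansion of $g_ig_{i+1}$ as a $\bbz$-combination of already-sorted products $D_\alpha^-(\cdots)\,p_\alpha(\cdots)\,D_\alpha^+(\cdots)$; each factor lies in $\bu_\bbz^\pm$ or $\bu_\bbz^0$ respectively (see the main obstacle below), and every term except the extremal one has strictly smaller total degree. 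If instead $g_i=(x_\alpha^+\otimes a)^{(r)}$ and $g_{i+1}=(x_\beta^-\otimes b)^{(s)}$ with $\alpha\neq\beta$, then either $\alpha-\beta\notin R$ and the generators commute outright (so $\iota$ strictly decreases), or $[x_\alpha^+,x_\beta^-]$ is a scalar multiple of a single root vector $x_{\alpha-\beta}^\pm$ and a divided-power commutator expansion analogous to \lemref{pmbasislem} writes the product as a sum of words of strictly smaller total degree. In every subcase the rewritten word is a $\bbz$-combination of words of strictly smaller $(\deg,\iota)$, so the induction closes.

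\textbf{Main obstacle.} The delicate point is the same-root $+$-past-$-$ case: one must verify that the $D_\alpha^\pm$ and $p_\alpha$ factors appearing on the right-hand side of \lemref{basic} lie in the correct integral subalgebras $\bu_\bbz^\pm$ and $\bu_\bbz^0$ respectively, not merely in $\bu_\bbz$. For the $p_\alpha$ this is immediate from the definition of $\bu_\bbz^0$ together with \corref{qinuz}. For the $D_\alpha^\pm$, the explicit formula of \propref{Dpmexp} (and its generalization obtained by the same inductive argument) shows that, despite the fraction $1/|\psi_3|$ appearing in the recursive definition, $D_\alpha^\pm$ is in fact an integer combination of divided powers of $x_\alpha^\pm\otimes\cdot$, and therefore lies in $\bu_\bbz^\pm$.
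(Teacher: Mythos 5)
Your proof is correct and follows essentially the same route as the paper: the paper likewise inducts on the degree of monomials and reduces Lemma \ref{tri} to showing that $\left[(x_\alpha^+\otimes a)^{(r)},(x_\alpha^-\otimes b)^{(s)}\right]$, $\left[(x_\alpha^+\otimes a)^{(r)},p_i(\chi)\right]$ and $\left[p_i(\chi),(x_\alpha^-\otimes a)^{(s)}\right]$ lie in the $\bbz$-span of $\mathcal{B}$ with strictly smaller degree, using exactly the inputs you cite (Lemma \ref{basic} together with Proposition \ref{Dpmexp} for the first, and Proposition \ref{x+rpastq} for the other two). Your explicit secondary induction on inversions and your discussion of the cross-root case $\alpha\neq\beta$ only spell out what the paper leaves implicit.
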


\noindent\textbf{Definition.} Define a \emph{monomial} in
$\bu_\bbz(\lie g\otimes A)$ to be any product of elements in
the set $$\{(x_\alpha^\pm\otimes b)^{(r)},\ q_i(\chi):\alpha\in
R^+,\ b\in\bb,\ r\in\bbz_{\geq0},\ i\in I,\ \chi\in\f(\bb)\}$$

By induction on the degree of monomials in $\bu_\bbz(\lie
g\otimes A)$ Lemma \ref{tri} will follow from the next lemma.

\subsection{Brackets in $\bu_\bbz(\lie g\otimes A)$}

\begin{lem}
For all $\alpha\in R^+$, $i\in I$, $a,b\in\bb$,
$\chi\in\f(\bb)$ and $r,s\in\bbz_{\geq0}$ the following hold\\
\noindent$(a)$ $\displaystyle\left[(x_\alpha^+\otimes
a)^{(r)},(x_\alpha^-\otimes
b)^{(s)}\right]\in\bbz-\span\mathcal{B}$ and
has degree less than $r+s$.\\
\noindent$(b)$ $\displaystyle\left[(x_\alpha^+\otimes
a)^{(r)},p_i(\chi)\right]\in\bbz-\span\mathcal{B}$ and has
degree less than $r+|\chi|$.\\
\noindent$(c)$ $\displaystyle\left[p_i(\chi),(x_\alpha^-\otimes
a)^{(s)}\right]\in\bbz-\span\mathcal{B}$ and has degree less
than $s+|\chi|$.
\end{lem}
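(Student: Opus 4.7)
The plan is to deduce all three parts from the explicit straightening formulas already established, with the degree-strict inequalities feeding the induction on degree that closes the proof of Lemma \ref{tri}.

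For part (a), I would substitute $\varphi=r\chi_a$ and $\chi=s\chi_b$ in Lemma \ref{basic} and apply $\Omega_\alpha$. A short induction on $k$ using the defining recursion gives $D^\pm(0,0,k\chi_c)=(x^\pm\otimes c)^{(k)}$, and $\bbd(0,0,\psi_3)=D^+(0,0,\psi_3)$ is immediate from the definition of $\bbd$. Hence the summand indexed by $\phi_1=\phi_2=\psi_1=\psi_2=0$ evaluates to exactly $(x_\alpha^-\otimes b)^{(s)}(x_\alpha^+\otimes a)^{(r)}$, and subtracting it expresses the commutator as a sum of terms $D_\alpha^-(\phi_1,\psi_1,s\chi_b-\psi_1-\psi_2)\,\bbd_\alpha(\phi_2,\psi_2,r\chi_a-\phi_1-\phi_2)$ ranging over nonzero tuples. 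By Proposition \ref{degD} and the degree bound for $\bbd$ noted after its definition, each such summand has degree at most $r+s-|\phi_1|-|\psi_1|-|\psi_2|$; the non-vanishing conditions $|\phi_1|=|\psi_1|$ and $|\phi_2|=|\psi_2|$ together with the nonzero tuple hypothesis force $|\phi_1|+|\psi_1|+|\psi_2|\geq 1$, yielding strict inequality.

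For parts (b) and (c), I would first observe that if $\alpha(h_i)=0$ then $h_i\otimes A$ commutes elementwise with $x_\alpha^\pm\otimes A$, so $p_i(\chi)$ commutes with $(x_\alpha^\pm\otimes a)^{(r)}$. When $\alpha(h_i)\neq 0$, I would apply Proposition \ref{x+rpastq}(i). The summand indexed by the unique $\psi\in\cs_r(\chi)$ supported only at the zero element of $\f$ (necessarily with value $r$ there) reduces to $p_i(\chi)(x_\alpha^+\otimes a)^{(r)}$ because $\pi(0)=\m(0)=\binom{\alpha(h_i)-1}{0}=1$. Subtracting leaves a sum of monomials $p_i\!\left(\chi-\sum_\phi\psi(\phi)\phi\right)\prod_\phi(\cdots)^{(\psi(\phi))}$ over $\psi$ whose support meets $\f\setminus\{0\}$; by Proposition \ref{degq} each such monomial has degree $|\chi|+r-\sum_\phi\psi(\phi)|\phi|<|\chi|+r$. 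Part (c) is symmetric via Proposition \ref{x+rpastq}(ii).

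The $\bbz$-span assertion should then follow by strong induction on degree coupled to the inductive proof of Lemma \ref{tri}: each summand above lies in $\bu_\bbz(\lie g\otimes A)$ by Corollary \ref{qinuz}, the integrality of the $p_i$'s, and the integrality of the binomials $\binom{\alpha(h_i)+|\phi|-1}{|\phi|}$, and each has strictly smaller degree, so by the inductive hypothesis for Lemma \ref{tri} each is in $\bbz$-span $\mathcal{B}$. The main obstacle I anticipate is careful isolation of the leading term in each case --- verifying that the matching constraints in (a) eliminate any stray tuple that could produce full degree $r+s$, and checking that the $\psi$ supported only at $0\in\f$ really accounts for the whole leading contribution in (b) and (c); once this bookkeeping is pinned down the degree bounds and Corollary \ref{qinuz} close the argument.
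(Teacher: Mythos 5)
Your overall route coincides with the paper's: part (a) is extracted from Lemma \ref{basic} specialized to $\varphi=r\chi_a$, $\chi=s\chi_b$, and parts (b), (c) from Proposition \ref{x+rpastq}, in each case by isolating the leading summand and bounding the degree of what remains. Your degree estimates are correct and in fact more carefully justified than the paper's (which simply asserts the degree drop in case (a)); the identification of the leading terms, the verification that the matching conditions $|\phi_1|=|\psi_1|$, $|\phi_2|=|\psi_2|$ force a strict drop, and the separate treatment of $\alpha(h_i)=0$ are all fine.

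The gap is in your final step, where you deduce membership in the $\bbz$-span of $\mathcal{B}$ from ``lies in $\bu_\bbz(\lie g\otimes A)$, has smaller degree, apply the inductive hypothesis of Lemma \ref{tri}.'' This is circular in the paper's architecture: Lemma \ref{tri} is proved by induction on degree \emph{using} the present lemma, so the present lemma must be established without appeal to that induction. Even granting a joint induction, the step does not close, because the inductive hypothesis concerns monomials of smaller degree, and an element of $\bu_\bbz(\lie g\otimes A)$ of smaller filtration degree need not be a $\bbz$-combination of monomials of smaller degree (e.g.\ $h\otimes a^2=(x^+\otimes a)(x^-\otimes a)-(x^-\otimes a)(x^+\otimes a)$ has degree $1$ but is exhibited only as a difference of degree-$2$ monomials). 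The fix is direct and is what the paper does for (a): expand $\bbd$ as $\sum p(\cdot,\cdot)D^+(\cdot,\cdot,\cdot)$, note that in the specialization the middle factors are $p(l\chi_a,l\chi_b)=p(l\chi_{ab})$, which after applying $\Omega_\alpha$ lie in $\mathcal{B}_0$ itself (Corollary \ref{qinuz} alone would only give membership in $\bu_\bbz$, not in the $\bbz$-span of $\mathcal{B}_0$), and use Proposition \ref{Dpmexp} to write each $D^\pm$ factor explicitly as a $\bbz$-combination of products of divided powers $(x^\pm_\alpha\otimes c)^{(k)}$ with $c\in\bb$, hence an element of the $\bbz$-span of $\mathcal{B}_\pm$; since the factors occur in the order $D^-\,p\,D^+$, each summand lies in the $\bbz$-span of $\mathcal{B}_-\mathcal{B}_0\mathcal{B}_+$, whose elements are in the $\bbz$-span of $\mathcal{B}$. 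Likewise for (b) and (c) the summands of Proposition \ref{x+rpastq} are visibly integer multiples (the binomial coefficients and $\m(\phi)$ are integers, and $b\pi(\phi)\in\bb$) of elements of $\mathcal{B}_0\mathcal{B}_+$, respectively $\mathcal{B}_-\mathcal{B}_0$; no induction is needed.
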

\begin{proof}
We start by proving $(a)$. By Lemma \ref{basic}, for all
$a,b\in\bb$ and $r,s\in\bbz_{\geq0}$,
\begin{eqnarray*}
&&\hskip-.4in x^+(r\chi_a)x^-(s\chi_b)\\
&=&\sum_{\substack{j,k\in\bbz_{\geq0}\\j+k\leq\min(r,s)}}\sum_{l=0}^k(-1)^{j+k}D^-(j\chi_a,j\chi_b,(s-j-k)\chi_b)
p(l\chi_a,l\chi_b)\\
&\times&D^+((k-l)\chi_a,(k-l)\chi_b,(r-j-k)\chi_a)
\end{eqnarray*}
It can be easily shown by induction on $l$ that
$q\left(l\chi_a,l\chi_b\right)=q\left(l\chi_{ab}\right)$. So we
have
\begin{eqnarray*}
x^+(r\chi_a)x^-(s\chi_b)
&=&\sum_{\substack{j,k\in\bbz_{\geq0}\\0<j+k\leq\min(r,s)}}\sum_{l=0}^k(-1)^{j+k}D^-(j\chi_a,j\chi_b,(s-j-k)\chi_b)
p(l\chi_{ab})\\
&\times&D^+((k-l)\chi_a,(k-l)\chi_b,(r-j-k)\chi_a)\\
&+&x^-(s\chi_b)x^+(r\chi_a)
\end{eqnarray*}
$D^-(j\chi_a,j\chi_b,(s-j-k)\chi_b)\in\bbz-\span\mathcal{B}_-$
and
$D^+((k-l)\chi_a,(k-l)\chi_b,(r-j-k)\chi_a)\bbz-\span\mathcal{B}_+$
by Proposition \ref{Dpmexp}. Also we clearly see that the
degree of this commutator is less than $r+s$. Thus $(a)$ is
proved. $(b)$ and $(c)$ hold by \eqref{binom} and Proposition
\ref{x+rpastq}.
\end{proof}

The theorem is now proved once we prove Lemma \ref{basic}. The
next section are dedicated to the proof of Lemma \ref{basic}.

\section{Proof of Lemma \ref{basic}}

We will adapt and extend the proof of the corresponding result
in \cite{CP}. We begin with some necessary identities.

\subsection{Identity for $p(\varphi,\chi)(x^+\otimes b)$}

\begin{lem}\label{qpastx+}
For all $\varphi,\chi\in\f$, and $b\in A$.
\begin{eqnarray*}
p(\varphi,\chi)(x^+\otimes b)
&=&(x^+\otimes b)p(\varphi,\chi)-2\sum_{\substack{c\in\supp\varphi\\ d\in\supp\chi}}(x^+\otimes bcd)
p(\varphi-\chi_c,\chi-\chi_d)\\
&+&\sum_{\substack{\phi_1\in\f_2(\varphi)\\ \phi_2\in\f_2(\chi)}}\m(\phi_1)\m(\phi_2)
\left(x^+\otimes b\pi(\psi_1)\pi(\psi_2)\right)p(\varphi-\psi_1,\chi-\psi_2)
\end{eqnarray*}
\end{lem}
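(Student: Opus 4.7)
The plan is to proceed by induction on $n=|\varphi|=|\chi|$. If $|\varphi|\neq|\chi|$ both sides vanish identically (the left-hand side by the $\delta_{|\varphi|,|\chi|}$ in the defining recursion of $p$, and each $p$ on the right-hand side by the same mechanism), so assume $|\varphi|=|\chi|$. The base case $n=0$ is immediate, and $n=1$ reduces to the single commutator identity $(h\otimes a)(x^+\otimes b)=(x^+\otimes b)(h\otimes a)+2(x^+\otimes ab)$ in $\bu(\lie{sl}_2\otimes A)$.

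For the inductive step, I would first expand $p(\varphi,\chi)(x^+\otimes b)$ using the defining recursion of $p$, producing a sum over nonzero $\psi_1\in\f(\varphi),\psi_2\in\f(\chi)$ with $|\psi_1|=|\psi_2|$, weighted by $-\tfrac{\m(\psi_1)\m(\psi_2)}{|\varphi|}$, of terms $(h\otimes\pi(\psi_1)\pi(\psi_2))\,p(\varphi-\psi_1,\chi-\psi_2)\,(x^+\otimes b)$. Then I would apply the inductive hypothesis to each inner factor (whose arguments are strictly smaller), producing a main term and two corrections. Finally, in each resulting summand I would commute $h\otimes\pi(\psi_1)\pi(\psi_2)$ past the leftmost $x^+$-factor by one application of the basic identity above. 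This yields six classes of terms, which I then regroup by the size of the multiset-piece consumed into the $x^+$-factor.

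Size zero consists only of the main term with $h$ kept on the left of $p$, and this reassembles via the recursion in reverse into $(x^+\otimes b)\,p(\varphi,\chi)$. Size one arises from two sources --- the commutator applied to the main term of the inductive hypothesis (with $|\psi_i|=1$), and the $(h\otimes\pi)(x^+\otimes bcd)p$ piece from the inductive linear correction --- and after a reverse application of the recursion to $p(\varphi-\chi_c,\chi-\chi_d)$ the two contributions sum to the linear correction in the lemma. Size two arises analogously from three sources (the commutator in the main term with $|\psi_i|=2$, the commutator in the linear correction with $|\psi_i|=1$, and the leading piece of the quadratic correction), and assembles into the $\f_2$ sum. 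Sizes $\geq 3$ arise from three analogous sources and should cancel identically.

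The main obstacle is the scalar bookkeeping for these collapses. The key combinatorial input is the identity $\sum_{c\in\supp\phi}\m(\phi-\chi_c)=\m(\phi)$ (a consequence of $\m(\phi-\chi_c)=(\phi(c)/|\phi|)\m(\phi)$), which is what permits clean reindexing of the $(\psi_1,\chi_c)$ and $(\psi_1,\phi_1')$ splittings that arise from combining the outer recursion with the inductive corrections. Given this, one must check that the prefactor $\tfrac{1}{|\varphi|}$ from the outer recursion combines with the $|\varphi-\chi_c|$ and $|\varphi-\phi_1|$ normalizations produced by the reverse applications of the recursion to yield the coefficients $-2$ for the size-one collapse, $+1$ for the size-two collapse, and $0$ for all larger sizes. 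Once these scalar identities are verified, the lemma follows.
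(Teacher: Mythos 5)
Your induction on $n=|\varphi|=|\chi|$, driven by the defining recursion for $p$ together with the basic commutator $[h\otimes a,\,x^+\otimes b]=2\,x^+\otimes ab$, is exactly the route the paper takes (it proves the lemma by induction on $|\chi|$ and defers the computation to the author's dissertation), and the coefficients you predict, $1,-2,1,0,\dots$, are indeed the correct ones --- they are the binomial coefficients of $(1-v)^{2}$ forced by $\alpha(h_\alpha)=2$, as one can confirm in the rank-one case $\varphi=k\chi_a$, $\chi=k\chi_{a'}$ by exponentiating the recursion. The only piece left unverified is the scalar bookkeeping you yourself flag, which is also precisely the content the paper omits.
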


The proof proceeds by induction on $|\chi|$. Details can be
found in \cite{Ch}.

\subsection{Identities Involving $D^\pm(\psi_1,\psi_2,\psi_3)$}

\begin{prop}\label{idD}
Let $b\in A$, and $\psi_1,\psi_2,\psi_3\in\f$ with $|\psi_3|\geq1$ be given. Then\\
\noindent$(i)$
\begin{eqnarray*}
\psi_2(b)D^\pm(\psi_1,\psi_2,\psi_3)&=&\sum_{\substack{\phi_1\in\f(\psi_1)\\ \phi_2\in\f(\psi_2)}}
\sum_{c\in\supp\psi_3}\phi_2(b)D^\pm(\phi_1,\phi_2,\chi_c)D^\pm(\psi_1-\phi_1,\psi_2-\phi_2,\psi_3-\chi_c)
\end{eqnarray*}
\noindent$(ii)$
\begin{eqnarray*}
&&\hskip-.6in(|\psi_2|+|\psi_3|)D^\pm(\psi_1,\psi_2,\psi_3)\\
&=&\sum_{\substack{\phi_1\in\f(\psi_1)\\ \phi_2\in\f(\psi_2)}}\sum_{c\in\supp\psi_3}(|\phi_1|+1)
D^\pm(\phi_1,\phi_2,\chi_c)D^\pm(\psi_1-\phi_1,\psi_2-\phi_2,\psi_3-\chi_c)
\end{eqnarray*}
\end{prop}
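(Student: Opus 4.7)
I aim to prove (i) by induction on $|\psi_3|$ and then derive (ii) from (i) by summing over $b\in A$.

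For the base case $|\psi_3|=1$, write $\psi_3=\chi_c$. The only nonvanishing terms in the RHS of (i) are those with $\phi_1=\psi_1$ and $\phi_2=\psi_2$, since otherwise the factor $D^\pm(\psi_1-\phi_1,\psi_2-\phi_2,0)=\delta_{|\psi_1|-|\phi_1|+|\psi_2|-|\phi_2|,0}$ vanishes; the surviving term is exactly $\psi_2(b)D^\pm(\psi_1,\psi_2,\chi_c)$.

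For the inductive step with $|\psi_3|\geq 2$, I would multiply the defining recursion of $D^\pm$ by $\psi_2(b)$ and split $\psi_2(b)=\phi_2(b)+(\psi_2-\phi_2)(b)$ inside the sum to obtain
$$|\psi_3|\,\psi_2(b)\,D^\pm(\psi_1,\psi_2,\psi_3)=F_1+F_2,$$
where $F_1$ is the RHS of (i) and
$$F_2:=\sum_{\phi_1,\phi_2,c}(\psi_2-\phi_2)(b)\,D^\pm(\phi_1,\phi_2,\chi_c)\,D^\pm(\psi_1-\phi_1,\psi_2-\phi_2,\psi_3-\chi_c).$$
It suffices to show $F_2=(|\psi_3|-1)F_1$. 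To do this I would expand the inner factor of $F_2$ by applying the inductive hypothesis (i) to $(\psi_2-\phi_2)(b)\,D^\pm(\psi_1-\phi_1,\psi_2-\phi_2,\psi_3-\chi_c)$ (whose third argument has size $|\psi_3|-1$), and expand the inner factor of $(|\psi_3|-1)F_1$ by the defining recursion. Both become triple sums over the same index set $S$ of $6$-tuples $(\phi_1,\phi_2,c,\beta_1,\beta_2,c')$ with $\phi_i+\beta_i\leq\psi_i$, $c\in\supp\psi_3$, and $c'\in\supp(\psi_3-\chi_c)$. In each sum the three $D^\pm$-factors of every summand are $D^\pm(\phi_1,\phi_2,\chi_c)$, $D^\pm(\beta_1,\beta_2,\chi_{c'})$, and $D^\pm(\psi_1-\phi_1-\beta_1,\psi_2-\phi_2-\beta_2,\psi_3-\chi_c-\chi_{c'})$; since each $D^\pm$ lies in the commutative algebra $\bu(\lie n^\pm\otimes A)$ (as $\lie n^\pm\otimes A$ is abelian), these factors may be freely reordered. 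The involution $(\phi_1,\phi_2,c)\leftrightarrow(\beta_1,\beta_2,c')$ on $S$ then matches the coefficient $\beta_2(b)$ of an $F_2$-summand with the coefficient $\phi_2(b)$ of a $(|\psi_3|-1)F_1$-summand, establishing $F_2=(|\psi_3|-1)F_1$.

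Part (ii) follows by summing (i) over $b\in A$: since $\sum_b\psi_2(b)=|\psi_2|$ and $\sum_b\phi_2(b)=|\phi_2|$, and since $D^\pm(\phi_1,\phi_2,\chi_c)$ vanishes unless $|\phi_1|=|\phi_2|$, the summed identity reads
$$|\psi_2|\,D^\pm(\psi_1,\psi_2,\psi_3)=\sum_{\phi_1,\phi_2,c}|\phi_1|\,D^\pm(\phi_1,\phi_2,\chi_c)\,D^\pm(\psi_1-\phi_1,\psi_2-\phi_2,\psi_3-\chi_c),$$
and adding the defining recursion $|\psi_3|\,D^\pm=\sum 1\cdot D^\pm D^\pm$ yields (ii). The hardest step is the inductive step for (i): one must verify both that the two expansions produce genuinely the same index set $S$ and that $(\phi,c)\leftrightarrow(\beta,c')$ is a well-defined involution, the delicate case being $c=c'$, which is guaranteed by the constraint $c'\in\supp(\psi_3-\chi_c)$ forcing $\psi_3(c)\geq 2$.
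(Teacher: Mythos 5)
Your argument is correct. The paper itself gives only a one-line indication of proof (``induction on $|\psi_3|$, details in the dissertation''), and your proof of $(i)$ is exactly an induction on $|\psi_3|$ of the expected kind: the key points are all in place --- the splitting $\psi_2(b)=\phi_2(b)+(\psi_2-\phi_2)(b)$, the observation that the one-step values $D^\pm(\phi_1,\phi_2,\chi_c)$ are scalar multiples of elements $x^\pm\otimes(\cdot)$ and hence commute, and the symmetry of the index set $\{\phi_i+\beta_i\le\psi_i,\ \chi_c+\chi_{c'}\le\psi_3\}$ under the swap $(\phi,c)\leftrightarrow(\beta,c')$, including the boundary case $c=c'$. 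Your derivation of $(ii)$ by summing $(i)$ over $b$ and using the factor $\delta_{|\phi_1|,|\phi_2|}$ in $D^\pm(\phi_1,\phi_2,\chi_c)$ to trade $|\phi_2|$ for $|\phi_1|$, then adding the defining recursion, is clean and valid; whether the dissertation instead runs a second independent induction for $(ii)$ cannot be checked from the paper, but your route is at least as economical.
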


The proofs proceed by induction on $|\psi_3|$. Details can
found in \cite{Ch}.

The following lemma is necessary for the proof of Lemma
\ref{basic}.

\subsection{Identity for $\sum_{\phi\in\f_{|\chi|}}\bbd(\phi,\chi,\varphi-\phi)(x^-\otimes
b)$}

\begin{lem}\label{idbbd}
For all $b\in A$ and $\varphi,\chi\in\f$,
\begin{eqnarray*}
&&\hskip-.6in\sum_{\phi\in\f(\varphi)}\bbd(\phi,\chi,\varphi-\phi)(x^-\otimes b)\\
&=&-(\chi(b)+1)\sum_{\phi'\in\f(\varphi)}\bbd(\phi',\chi+\chi_b,\varphi-\phi')\\
&+&\sum_{\phi\in\f(\varphi)}\sum_{\substack{\phi_1\in\f(\phi)\\ \phi_2\in\f(\chi)}}(|\phi_1|+1)
D^-(\phi_1,\phi_2,\chi_b)\bbd(\phi-\phi_1,\chi-\phi_2,\varphi-\phi)
\end{eqnarray*}
\end{lem}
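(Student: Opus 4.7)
\textbf{Proof plan for Lemma \ref{idbbd}.} The strategy is to unpack the left-hand side via the definition of $\bbd$, migrate the factor $(x^-\otimes b)$ leftward through the resulting product $p(\phi_1,\phi_2)\,D^+(\phi-\phi_1,\chi-\phi_2,\varphi-\phi)$, and then reassemble the combinatorial sum using Proposition~\ref{idD}. Concretely, writing
\begin{equation*}
\bbd(\phi,\chi,\varphi-\phi)=\sum_{\substack{\phi_1\in\f(\phi)\\ \phi_2\in\f(\chi)}}p(\phi_1,\phi_2)\,D^+(\phi-\phi_1,\chi-\phi_2,\varphi-\phi),
\end{equation*}
the LHS becomes a triple sum in which $(x^-\otimes b)$ sits at the far right of each term.

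First I would establish (or invoke from \cite{Ch}, in parallel to Lemma \ref{qpastx+}) a commutation identity for $D^+(\psi_1,\psi_2,\psi_3)(x^-\otimes b)$. Since by Proposition \ref{Dpmexp} each $D^+$ is a product of divided powers $(x^+\otimes\,\cdot\,)^{(k)}$, commuting $x^-\otimes b$ past these factors through $[x^+\otimes a,x^-\otimes b]=h\otimes ab$ yields $(x^-\otimes b)D^+(\psi_1,\psi_2,\psi_3)$ plus lower-order terms in which one $(x^+\otimes a\pi)^{(k)}$ is replaced by $(h\otimes ab\pi)(x^+\otimes a\pi)^{(k-1)}$ (together with further iterations). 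The identity of Proposition \ref{idD}$(ii)$ is the combinatorial engine that recognizes the resulting pattern as
\begin{equation*}
D^+(\psi_1,\psi_2,\psi_3)(x^-\otimes b)=\sum_{\substack{\phi_1'\in\f(\psi_1)\\ \phi_2'\in\f(\psi_2)}}(|\phi_1'|+1)\,D^-(\phi_1',\phi_2',\chi_b)\,D^+(\psi_1-\phi_1',\psi_2-\phi_2',\psi_3)+\text{($h$-corrections)},
\end{equation*}
where the ``$h$-corrections'' are products of $(h\otimes b\pi)$-style factors times smaller $D^+$. This step is the main technical obstacle: verifying by induction on $|\psi_3|$ that the precise coefficients $|\phi_1'|+1$ emerge and that the $h$-corrections collect into exactly the shape needed to extend the $p$-factor to $p(\cdot,\cdot+\chi_b)$.

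Next I would move the leftover $(h\otimes\cdot)$ factors produced above through the pre-existing $p(\phi_1,\phi_2)$ on its left, using the defining recursion of $p$ (and the dual of Lemma \ref{qpastx+} for $x^-$, which follows by induction on $|\chi|$ in the same manner). The recursion
\[
|\varphi|\,p(\varphi,\chi)+\sum_{\substack{\psi_1\leq\varphi\\ \psi_2\leq\chi\\ \psi_1,\psi_2\neq 0}}\m(\psi_1)\m(\psi_2)(h\otimes\pi(\psi_1)\pi(\psi_2))\,p(\varphi-\psi_1,\chi-\psi_2)=0
\]
is precisely what absorbs those $h$-corrections: the sum of $(h\otimes b\pi)\,p(\phi_1,\phi_2)$ pieces assembles into $-(\chi(b)+1)\,p(\phi_1,\phi_2+\chi_b)$ terms, producing the coefficient $-(\chi(b)+1)$ seen in the first sum of the RHS. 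Reassembling with the remaining $D^+$-factor reconstitutes $\bbd(\phi',\chi+\chi_b,\varphi-\phi')$.

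Finally, the ``$D^-\bbd$'' contributions from Step 1 are already in the desired form once the $p$-factor is pulled back into $\bbd(\phi-\phi_1,\chi-\phi_2,\varphi-\phi)$ via the definition of $\bbd$. I would run the argument as an induction on $|\varphi|+|\chi|$, with the base case $\varphi=\chi=0$ reducing to $(x^-\otimes b)=-\bbd(0,\chi_b,0)+D^-(0,0,\chi_b)$, which is immediate from the definitions. The main bookkeeping challenge throughout is that the sums range over quadruples of multisets constrained by containment; the key identities in Proposition \ref{idD} let us swap the roles of ``extract a single $\chi_c$'' summations, which is essential for merging the $h$-correction terms into the $-(\chi(b)+1)$ coefficient cleanly.
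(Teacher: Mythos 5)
There is a genuine gap, and it sits exactly at the step you identify as "the main technical obstacle." Your proposed intermediate identity
$D^+(\psi_1,\psi_2,\psi_3)(x^-\otimes b)=\sum(|\phi_1'|+1)D^-(\phi_1',\phi_2',\chi_b)D^+(\psi_1-\phi_1',\psi_2-\phi_2',\psi_3)+(h\textnormal{-corrections})$
is structurally impossible. By Proposition \ref{Dpmexp}, $D^+$ is a product of divided powers of elements $x^+\otimes(\cdot)$, and in $\lie{sl}_2\otimes A$ the only brackets available when you migrate $x^-\otimes b$ leftward are $[x^+\otimes a,x^-\otimes b]=h\otimes ab$ and then $[h\otimes ab,x^+\otimes a']=2x^+\otimes aa'b$: the chain $x^-\to h\to x^+\to 0$ never returns to $x^-$. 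So the commutator $D^+(\psi_1,\psi_2,\psi_3)(x^-\otimes b)-(x^-\otimes b)D^+(\psi_1,\psi_2,\psi_3)$ consists only of $h$-type and $x^+$-type terms; it cannot produce the factors $x^-\otimes b\pi(\phi_1')\pi(\phi_2')$ with $|\phi_1'|\geq1$ that constitute $D^-(\phi_1',\phi_2',\chi_b)$. Those terms in the lemma's right-hand side arise instead from pushing the \emph{unmodified} $x^-\otimes b$ through the $p(\phi_1,\phi_2)$ factor (built from $h\otimes(\cdot)$, where $[h\otimes a,x^-\otimes b]=-2x^-\otimes ab$ does modify the tensor component); this is precisely Proposition \ref{x+pastq}$(ii)$. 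Your plan also never accounts for the second-generation $x^+$-absorption terms coming from $[h,x^+]$, and the induction on $|\varphi|+|\chi|$ is left without a role: nowhere in the described computation is the inductive hypothesis actually invoked.

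For contrast, the paper avoids commuting through the explicit product form of $D^+$ altogether. It inducts on $|\varphi|-|\chi|$, with base case $|\varphi|=|\chi|$ where $\bbd(\varphi,\chi,0)=p(\varphi,\chi)$ and the lemma is exactly Proposition \ref{x+pastq}$(ii)$. The inductive step uses Lemma \ref{qpastx+} to derive the recursion \eqref{eqnbbd}, which peels a single factor $(x^+\otimes c)$ off the left of $\bbd(\phi,\chi,\varphi-\phi)$ at the cost of shrinking the third argument; the induction hypothesis is then applied to the shorter $\bbd(\cdot,\cdot,\cdot)(x^-\otimes b)$, and the pieces are reassembled using Proposition \ref{x+pastq}$(i)$, the identity \eqref{eqnq} (which is where the coefficient $-(\chi(b)+1)$ and the shift $\chi\mapsto\chi+\chi_b$ actually come from), and finally Proposition \ref{idD}$(i)$ (not $(ii)$). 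Your instinct that the defining recursion of $p$ absorbs the $h$-terms into $-(\chi(b)+1)p(\cdot,\cdot+\chi_b)$ matches the role of \eqref{eqnq}, but to make your direct-commutation route work you would first need a correct closed formula for $D^+(\psi_1,\psi_2,\psi_3)(x^-\otimes b)$, and that is essentially as hard as the lemma itself.
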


\subsection{Proof of Lemma \ref{basic}}

The following is the proof of Lemma \ref{basic} using Lemma
\ref{idbbd} and induction on $|\chi|$.
\begin{proof}
It can easily be shown by induction on $|\varphi|$ that
$$\bbd(0,0,\varphi)=x^+(\varphi)$$
Thus Lemma \ref{basic} is true for $\chi=0$. Assume that
$\chi\in\f-\{0\}$ then
\begin{eqnarray*}
|\chi|x^+(\varphi)x^-(\chi)
&=&\sum_{b\in\supp\chi}\chi(b)x^+(\varphi)x^-(\chi)\\
&=&\sum_{b\in\supp\chi}x^+(\varphi)x^-(\chi-\chi_b)(x^-\otimes b)\\
&=&\sum_{b\in\supp\chi}
\sum_{\substack{\psi_1,\psi_2,\phi_1,\phi_2\in\f\\ \psi_1+\psi_2\in\f(\chi-\chi_b)\\ \phi_1+\phi_2\in\f(\varphi)}}
(-1)^{|\psi_1|+|\psi_2|}D^-(\phi_1,\psi_1,\chi-\chi_b-\psi_1-\psi_2)\\
&\times&\bbd(\phi_2,\psi_2,\varphi-\phi_1-\phi_2)(x^-\otimes b)
\hskip.5in(\textnormal{by the induction hypothesis})\\
&=&\sum_{b\in\supp\chi}
\sum_{\substack{\psi_1,\psi_2,\phi_1\in\f\\ \psi_1+\psi_2\in\f(\chi-\chi_b)\\ \phi_1\in\f(\varphi)}}
(-1)^{|\psi_1|+|\psi_2|}D^-(\phi_1,\psi_1,\chi-\chi_b-\psi_1-\psi_2)\\
&\times&\sum_{\substack{\phi_2\in\f(\varphi-\phi_1)}}\bbd(\phi_2,\psi_2,\varphi-\phi_1-\phi_2)(x^-\otimes b)\\
&=&\sum_{b\in\supp\chi}
\sum_{\substack{\psi_1,\psi_2,\phi_1\in\f\\ \psi_1+\psi_2\in\f(\chi-\chi_b)\\ \phi_1\in\f(\varphi)}}
(-1)^{|\psi_1|+|\psi_2|}D^-(\phi_1,\psi_1,\chi-\chi_b-\psi_1-\psi_2)\\
&\times&\Bigg(-(\psi_2(b)+1)\sum_{\phi_2'\in\f(\varphi-\phi_1)}\bbd(\phi_2',\psi_2+\chi_b,\varphi-\phi_1-\phi_2')\\
&+&\sum_{\substack{\phi_2\in\f(\varphi-\phi_1)}}
\sum_{\substack{\tau\in\f(\phi_2)\\ \tau'\in\f(\psi_2)}}(|\tau|+1)D^-(\tau,\tau',\chi_b)
\bbd(\phi_2-\tau,\psi_2-\tau',\varphi-\phi_1-\phi_2)\Bigg)\\
&&(\textnormal{by Lemma \ref{idbbd}})
\end{eqnarray*}

\begin{eqnarray*}
&=&\sum_{b\in\supp\chi}
\sum_{\substack{\psi_1,\psi_2,\phi_1\in\f\\ \psi_1+\psi_2\in\f(\chi-\chi_b)\\ \phi_1\in\f(\varphi)}}
(-1)^{|\psi_1|+|\psi_2|+1}D^-(\phi_1,\psi_1,\chi-\chi_b-\psi_1-\psi_2)(\psi_2(b)+1)\\
&\times&\sum_{\phi_2'\in\f(\varphi-\phi_1)}\bbd(\phi_2',\psi_2+\chi_b,\varphi-\phi_1-\phi_2')\\
&+&\sum_{b\in\supp\chi}
\sum_{\substack{\psi_1,\psi_2,\phi_1\in\f\\ \psi_1+\psi_2\in\f(\chi-\chi_b)\\ \phi_1\in\f(\varphi)}}
(-1)^{|\psi_1|+|\psi_2|}D^-(\phi_1,\psi_1,\chi-\chi_b-\psi_1-\psi_2)\\
&\times&\sum_{\substack{\phi_2\in\f(\varphi-\phi_1)}}\sum_{\substack{\tau\in\f(\phi_2)\\ \tau'\in\f(\psi_2)}}(|\tau|+1)
D^-(\tau,\tau',\chi_b)\bbd(\phi_2-\tau,\psi_2-\tau',\varphi-\phi_1-\phi_2)\\
&=&\sum_{b\in\supp\chi}\sum_{\substack{\psi_1,\psi_2',\phi_1\in\f\\ \psi_1+\psi_2'\in\f(\chi)\\ \phi_1\in\f(\varphi)}}
(-1)^{|\psi_1|+|\psi_2'|}D^-(\phi_1,\psi_1,\chi-\psi_1-\psi_2')\psi_2'(b)\\
&\times&\sum_{\phi_2'\in\f(\varphi-\phi_1)}\bbd(\phi_2',\psi_2',\varphi-\phi_1-\phi_2')\\
&+&\sum_{\substack{\psi_1,\psi_2,\phi_1,\phi_2\in\f\\ \psi_1+\psi_2\in\f(\chi)\\ \phi_1+\phi_2\in\f(\varphi)}}
\sum_{b\in\supp(\chi-\psi_1-\psi_2)}\sum_{\substack{\tau\in\f(\phi_2)\\ \tau'\in\f(\psi_2)}}
(|\tau|+1)(-1)^{|\psi_1|+|\psi_2|}D^-(\phi_1,\psi_1,\chi-\chi_b-\psi_1-\psi_2)\\
&\times&D^-(\tau,\tau',\chi_b)\bbd(\phi_2-\tau,\psi_2-\tau',\varphi-\phi_1-\phi_2)\\
&=&\sum_{\substack{\psi_1,\psi_2',\phi_1,\phi_2'\in\f\\ \psi_1+\psi_2'\in\f(\chi)\\ \phi_1+\phi_2'\in\f(\varphi)}}
(-1)^{|\psi_1|+|\psi_2'|}|\psi_2'|D^-(\phi_1,\psi_1,\chi-\psi_1-\psi_2')\bbd(\phi_2',\psi_2',\varphi-\phi_1-\phi_2')\\
&+&\sum_{\substack{\psi_1',\psi_2',\phi_1',\phi_2'\in\f\\ \psi_1'+\psi_2'\in\f(\chi)\\ \phi_1'+\phi_2'\in\f(\varphi)}}
(-1)^{|\psi_1'|+|\psi_2'|}\sum_{\substack{\tau\in\f(\phi_1')\\ \tau'\in\f(\psi_1')}}
\sum_{b\in\supp(\chi-\psi_1'-\psi_2')}(|\tau|+1)D^-(\tau,\tau',\chi_b)\\
&\times&D^-(\phi_1'-\tau,\psi_1'-\tau',\chi-\chi_b-\psi_1'-\psi_2')
\bbd(\phi_2',\psi_2',\varphi-\phi_1-\phi_2)\\
&=&\sum_{\substack{\psi_1,\psi_2',\phi_1,\phi_2'\in\f\\ \psi_1+\psi_2'\in\f(\chi)\\ \phi_1+\phi_2'\in\f(\varphi)}}
(-1)^{|\psi_1|+|\psi_2'|}|\psi_2'|D^-(\phi_1,\psi_1,\chi-\psi_1-\psi_2')\bbd(\phi_2',\psi_2',\varphi-\phi_1-\phi_2')\\
&+&\sum_{\substack{\psi_1',\psi_2',\phi_1',\phi_2'\in\f\\ \psi_1'+\psi_2'\in\f(\chi)\\ \phi_1'+\phi_2'\in\f(\varphi)}}
(-1)^{|\psi_1'|+|\psi_2'|}(|\chi|-|\psi_2'|)D^-(\phi_1',\psi_1',\chi-\psi_1'-\psi_2')
\bbd(\phi_2',\psi_2',\varphi-\phi_1-\phi_2)\\
&&\hskip-.3in(\textnormal{by Proposition }\ref{idD}(ii))
\end{eqnarray*}

\begin{eqnarray*}
&=&|\chi|\sum_{\substack{\psi_1,\psi_2,\phi_1,\phi_2\in\f\\ \psi_1+\psi_2\in\f(\chi)\\ \phi_1+\phi_2\in\f(\varphi)}}
(-1)^{|\psi_1|+|\psi_2|}D^-(\phi_1,\psi_1,\chi-\psi_1-\psi_2)\bbd(\phi_2,\psi_2,\varphi-\phi_1-\phi_2)
\end{eqnarray*}
\end{proof}
So all that remains to prove for Lemma \ref{basic} and Theorem
\ref{thm} is Lemma \ref{idbbd}.

\subsection{Proof of Lemma \ref{idbbd}}

\begin{proof}
If $|\phi|>|\varphi|$ we have 0 on both sides. So the identity
is trivially true in this case. Assume that
$|\varphi|\geq|\phi|$ and proceed by induction on
$|\varphi|-|\phi|$. In the case $|\phi|=|\varphi|$ Lemma
\ref{idbbd} becomes $$p(\varphi,\chi)(x^-\otimes b)
=\sum_{\substack{\phi_1\in\f(\varphi)\\
\phi_2\in\f(\chi)}}(\phi_1+1)\m(\phi_1)\m(\phi_2)(x^-\otimes
b\pi(\phi_1)\pi(\phi_2)) p(\varphi-\phi_1,\chi-\phi_2)$$ which
is just Lemma \ref{x+pastq}$(ii)$. So Assume that
$|\varphi|-|\phi|>0$ then by Lemma \ref{qpastx+} we have
\begin{eqnarray}\label{eqnbbd}
(|\varphi|-|\phi|)\bbd(\phi,\chi,\varphi-\phi)
&=&\sum_{c\in\supp(\varphi-\phi)}\Bigg((x^+\otimes c)\bbd(\phi,\chi,\varphi-\phi-\chi_c)\\
&-&\sum_{\substack{d\in\supp\phi\\ d'\in\supp\chi}}(x^+\otimes cdd')
\bbd(\phi-\chi_d,\chi-\chi_{d'},\varphi-\phi-\chi_c)\Bigg)\nonumber
\end{eqnarray}
Hence
\begin{eqnarray*}
&&\hskip-.6in(|\varphi|-|\phi|)\sum_{\phi\in\f(\varphi)}\bbd(\phi,\chi,\varphi-\phi)(x^-\otimes b)\\
&=&\sum_{\phi\in\f(\varphi)}\sum_{c\in\supp(\varphi-\phi)}
\Bigg((x^+\otimes c)\bbd(\phi,\chi,\varphi-\phi-\chi_c)\\
&-&\sum_{\substack{d\in\supp\phi\\ d'\in\supp\chi}}(x^+\otimes cdd')
\bbd(\phi-\chi_d,\chi-\chi_{d'},\varphi-\phi-\chi_c)\Bigg)(x^-\otimes b)\\
&=&\sum_{c\in\supp\varphi}(x^+\otimes c)\sum_{\phi\in\f(\varphi-\chi_c)}
\bbd(\phi,\chi,\varphi-\phi-\chi_c)(x^-\otimes b)\\
&-&\sum_{\substack{c\in\supp\varphi\\ d\in\supp(\varphi-\chi_c)\\ d'\in\supp\chi}}(x^+\otimes cdd')
\sum_{\phi'\in\f(\varphi-\chi_c-\chi_d)}\bbd(\phi',\chi-\chi_{d'},\varphi-\phi'-\chi_d-\chi_c)(x^-\otimes b)\\
\end{eqnarray*}
\begin{eqnarray*}
&=&-(\chi(b)+1)\sum_{c\in\supp\varphi}(x^+\otimes c)\sum_{\phi'\in\f(\varphi-\chi_c)}
\bbd(\phi',\chi+\chi_b,\varphi-\phi'-\chi_c)\\
&+&\sum_{c\in\supp\varphi}\sum_{\phi\in\f(\varphi-\chi_c)}
\sum_{\substack{\phi_1\in\f(\phi)\\ \phi_2\in\f(\chi)}}(|\phi_1|+1)(x^+\otimes c)D^-(\phi_1,\phi_2,\chi_b)
\bbd(\phi-\phi_1,\chi-\phi_2,\varphi-\phi-\chi_c)\\
&+&\sum_{\substack{d'\in\supp\chi\\ c\in\supp\varphi\\ d\in\supp(\varphi-\chi_c)}}
\sum_{\phi'\in\f(\varphi-\chi_c-\chi_d)}(\chi(b)-\chi_{d'}(b)+1)(x^+\otimes cdd')\\
&\times&\bbd(\phi',\chi-\chi_{d'}+\chi_b,\varphi-\phi'-\chi_d-\chi_c)\\
&-&\sum_{\substack{d'\in\supp\chi\\ c\in\supp\varphi\\ d\in\supp(\varphi-\chi_c)}}
\sum_{\phi\in\f(\varphi-\chi_c-\chi_d)}\sum_{\substack{\phi_1\in\f(\phi)\\ \phi_2\in\f(\chi-\chi_{d'})}}(|\phi_1|+1)
(x^+\otimes cdd')D^-(\phi_1,\phi_2,\chi_b)\\
&\times&\bbd(\phi-\phi_1,\chi-\chi_{d'}-\phi_2,\varphi-\phi-\chi_d-\chi_c)
\hskip.2in(\textnormal{by the induction hypothesis})\\
&=&-(\chi(b)+1)\sum_{\phi'\in\f(\varphi)}\sum_{c\in\supp(\varphi-\phi')}
(x^+\otimes c)\bbd(\phi',\chi+\chi_b,\varphi-\phi'-\chi_c)\\
&+&\sum_{\phi\in\f(\varphi)}\sum_{c\in\supp(\varphi-\phi)}
\sum_{\substack{\phi_1\in\f(\phi)\\ \phi_2\in\f(\chi)}}(|\phi_1|+1)(x^+\otimes c)D^-(\phi_1,\phi_2,\chi_b)
\bbd(\phi-\phi_1,\chi-\phi_2,\varphi-\phi-\chi_c)\\
&+&\sum_{\phi\in\f(\varphi)}\sum_{c\in\supp(\varphi-\phi)}
\sum_{\substack{d\in\supp\phi\\ d'\in\supp\chi}}(\chi(b)-\chi_{d'}(b)+1)(x^+\otimes cdd')\\
&\times&\bbd(\phi-\chi_d,\chi-\chi_{d'}+\chi_b,\varphi-\phi-\chi_c)\\
&-&\sum_{\phi\in\f(\varphi)}\sum_{c\in\supp(\varphi-\phi)}\sum_{\substack{d\in\supp\phi\\ d'\in\supp\chi}}
\sum_{\substack{\phi_1\in\f(\phi-\chi_d)\\ \phi_2\in\f(\chi-\chi_{d'})}}(|\phi_1|+1)
(x^+\otimes cdd')D^-(\phi_1,\phi_2,\chi_b)\\
&\times&\bbd(\phi-\chi_d-\phi_1,\chi-\chi_{d'}-\phi_2,\varphi-\phi-\chi_c)
\end{eqnarray*}
On the other hand
\begin{eqnarray*}
&&\hskip-.6in(|\varphi|-|\phi|)\sum_{\phi\in\f(\varphi)}
\sum_{\substack{\phi_1\in\f(\phi)\\ \phi_2\in\f(\chi)}}(|\phi_1|+1)D^-(\phi_1,\phi_2,\chi_b)
\bbd(\phi-\phi_1,\chi-\phi_2,\varphi-\phi)\\
&=&\sum_{\phi\in\f(\varphi)}\sum_{\substack{\phi_1\in\f(\phi)\\ \phi_2\in\f(\chi)}}
(|\phi_1|+1)D^-(\phi_1,\phi_2,\chi_b)\sum_{c\in\supp(\varphi-\phi)}(x^+\otimes c)
\bbd(\phi-\phi_1,\chi-\phi_2,\varphi-\phi-\chi_c)\\
&-&\sum_{\phi\in\f(\varphi)}\sum_{\substack{\phi_1\in\f(\phi)\\ \phi_2\in\f(\chi)}}
(|\phi_1|+1)D^-(\phi_1,\phi_2,\chi_b)\sum_{c\in\supp(\varphi-\phi)}
\sum_{\substack{d\in\supp\phi-\phi_1\\ d'\in\supp\chi-\phi_2}}(x^+\otimes cdd')\\
&\times&\bbd(\phi-\phi_1-\chi_d,\chi-\phi_2-\chi_{d'},\varphi-\phi-\chi_c)\hskip.2in(\textnormal{by \eqref{eqnbbd}})
\end{eqnarray*}

So it suffices to show that
\begin{eqnarray*}
0&=&-(\chi(b)+1)\sum_{\phi'\in\f(\varphi)}\sum_{c\in\supp(\varphi-\phi')}(x^+\otimes c)
\bbd(\phi',\chi+\chi_b,\varphi-\phi'-\chi_c)\\
&+&\sum_{\phi\in\f(\varphi)}\sum_{c\in\supp(\varphi-\phi)}\sum_{\substack{\phi_1\in\f(\phi)\\ \phi_2\in\f(\chi)}}
(|\phi_1|+1)\left[(x^+\otimes c),D^-(\phi_1,\phi_2,\chi_b)\right]\\
&\times&\bbd(\phi-\phi_1,\chi-\phi_2,\varphi-\phi-\chi_c)\\
&+&\sum_{\phi'\in\f(\varphi)}\sum_{c\in\supp(\varphi-\phi')}\sum_{\substack{d\in\supp\phi'\\ d'\in\supp\chi}}
(\chi(b)-\chi_{d'}(b)+1)(x^+\otimes cdd')\\
&\times&\bbd(\phi'-\chi_d,\chi-\chi_{d'}+\chi_b,\varphi-\phi'-\chi_c)\\
&-&\sum_{\phi\in\f(\varphi)}\sum_{c\in\supp(\varphi-\phi)}\sum_{\substack{d\in\supp\phi\\ d'\in\supp\chi}}
\sum_{\substack{\phi_1\in\f(\phi-\chi_d)\\ \phi_2\in\f(\chi-\chi_{d'})}}(|\phi_1|+1)
\left[(x^+\otimes cdd'),D^-(\phi_1,\phi_2,\chi_b)\right]\\
&\times&\bbd(\phi-\chi_d-\phi_1,\chi-\chi_{d'}-\phi_2,\varphi-\phi-\chi_c)\\
&+&(|\varphi|-|\phi|)(\chi(b)+1)\sum_{\phi'\in\f(\varphi)}\bbd(\phi',\chi+\chi_b,\varphi-\phi')
\end{eqnarray*}

Concentrating on the terms with Lie brackets we obtain
\begin{eqnarray*}
&&\hskip-.6in\sum_{\phi\in\f(\varphi)}\sum_{c\in\supp(\varphi-\phi)}
\sum_{\substack{\phi_1\in\f(\phi)\\ \phi_2\in\f(\chi)}}(|\phi_1|+1)\left[(x^+\otimes c),D^-(\phi_1,\phi_2,\chi_b)\right]
\bbd(\phi-\phi_1,\chi-\phi_2,\varphi-\phi-\chi_c)\\
&-&\sum_{\phi\in\f(\varphi)}\sum_{c\in\supp(\varphi-\phi)}\sum_{\substack{d\in\supp\phi\\ d'\in\supp\chi}}
\sum_{\substack{\phi_1\in\f(\phi-\chi_d)\\ \phi_2\in\f(\chi-\chi_{d'})}}(|\phi_1|+1)
\left[(x^+\otimes cdd'),D^-(\phi_1,\phi_2,\chi_b)\right]\\
&\times&\bbd(\phi-\chi_d-\phi_1,\chi-\chi_{d'}-\phi_2,\varphi-\phi-\chi_c)\\
&=&\sum_{\phi'\in\f(\varphi)}\sum_{\substack{\psi_1'\in\f(\phi')\\ \psi_2\in\f(\chi)}}\sum_{c\in\supp\psi_1'}
\sum_{\substack{\phi_1\in\f(\psi_1'-\chi_c)\\ \phi_2\in\f(\psi_2)\\ |\phi_1|=|\phi_2|}}\m(\phi_1)\m(\phi_2)
(h\otimes bc\pi(\phi_1)\pi(\phi_2))p(\psi_1'-\chi_c-\phi_1,\psi_2-\phi_2)\\
&\times&D^+(\phi'-\psi_1',\chi-\psi_2,\varphi-\phi')
\end{eqnarray*}
The following formula will be necessary, it can be proved by
induction on $|\varphi|$ (see \cite{Ch} for details). Suppose
that $b\in A$ and $\varphi,\chi\in\f$ then
\begin{eqnarray}\label{eqnq}
&&\hskip-.6in-(\chi(b)+1)p(\varphi,\chi+\chi_b)\nonumber\\
&=&\sum_{c\in\supp\varphi}\sum_{\substack{\phi_1\in\f(\varphi-\chi_c)\\ \phi_2\in\f(\chi)\\ |\phi_1|=|\phi_2|}}
\m(\phi_1)\m(\phi_2)(h\otimes bc\pi(\phi_1)\pi(\phi_2))p(\varphi-\chi_c-\phi_1,\chi-\phi_2)
\end{eqnarray}
From above
\begin{eqnarray*}
&&\hskip-.6in\sum_{\phi\in\f(\varphi)}\sum_{c\in\supp(\varphi-\phi)}
\sum_{\substack{\phi_1\in\f(\phi)\\ \phi_2\in\f(\chi)}}(|\phi_1|+1)\left[(x^+\otimes c),D^-(\phi_1,\phi_2,\chi_b)\right]
\bbd(\phi-\phi_1,\chi-\phi_2,\varphi-\phi-\chi_c)\\
&-&\sum_{\phi\in\f(\varphi)}\sum_{c\in\supp(\varphi-\phi)}\sum_{\substack{d\in\supp\phi\\ d'\in\supp\chi}}
\sum_{\substack{\phi_1\in\f(\phi-\chi_d)\\ \phi_2\in\f(\chi-\chi_{d'})}}(|\phi_1|+1)
\left[(x^+\otimes cdd'),D^-(\phi_1,\phi_2,\chi_b)\right]\\
&\times&\bbd(\phi-\chi_d-\phi_1,\chi-\chi_{d'}-\phi_2,\varphi-\phi-\chi_c)\\
&=&\sum_{\phi'\in\f(\varphi)}\sum_{\substack{\psi_1'\in\f(\phi')\\ \psi_2\in\f(\chi)}}\sum_{c\in\supp\psi_1'}
\sum_{\substack{\phi_1\in\f(\psi_1'-\chi_c)\\ \phi_2\in\f(\psi_2)\\ |\phi_1|=|\phi_2|}}\m(\phi_1)\m(\phi_2)
(h\otimes bc\pi(\phi_1)\pi(\phi_2))p(\psi_1'-\chi_c-\phi_1,\psi_2-\phi_2)\\
&\times&D^+(\phi'-\psi_1',\chi-\psi_2,\varphi-\phi')\\
&=&-\sum_{\phi'\in\f(\varphi)}\sum_{\substack{\psi_1'\in\f(\phi')\\ \psi_2\in\f(\chi)}}(\psi_2(b)+1)
p(\psi_1',\psi_2+\chi_b)D^+(\phi'-\psi_1',\chi-\psi_2,\varphi-\phi')\hskip.2in(\textnormal{by \eqref{eqnq}})\\
&=&-\sum_{\phi'\in\f(\varphi)}\sum_{\substack{\psi_1'\in\f(\phi')\\ \psi_2'\in\f(\chi+\chi_b)}}\psi_2'(b)
p(\psi_1',\psi_2')D^+(\phi'-\psi_1',\chi+\chi_b-\psi_2',\varphi-\phi')
\end{eqnarray*}
So the proof of Lemma \ref{idbbd} is reduced to showing
\begin{eqnarray}\label{Dtobbd}
0&=&-\sum_{\phi'\in\f(\varphi)}\sum_{\substack{\psi_1'\in\f(\phi')\\ \psi_2'\in\f(\chi+\chi_b)}}\psi_2'(b)
p(\psi_1',\psi_2')D^+(\phi'-\psi_1',\chi+\chi_b-\psi_2',\varphi-\phi')\nonumber\\
&-&(\chi(b)+1)\sum_{\phi'\in\f(\varphi)}\sum_{c\in\supp(\varphi-\phi')}(x^+\otimes c)
\bbd(\phi',\chi+\chi_b,\varphi-\phi'-\chi_c)\nonumber\\
&+&\sum_{\phi'\in\f(\varphi)}\sum_{c\in\supp(\varphi-\phi')}\sum_{\substack{d\in\supp\phi'\\ d'\in\supp\chi}}
(\chi(b)-\chi_{d'}(b)+1)(x^+\otimes cdd')\nonumber\\
&\times&\bbd(\phi'-\chi_d,\chi-\chi_{d'}+\chi_b,\varphi-\phi'-\chi_c)\nonumber\\
&+&(|\varphi|-|\phi|)(\chi(b)+1)\sum_{\phi'\in\f(\varphi)}\bbd(\phi',\chi+\chi_b,\varphi-\phi')
\end{eqnarray}
Using Proposition \ref{x+pastq}$(i)$ we have
\begin{eqnarray*}
&&\hskip-.6in(x^+\otimes c)\bbd(\phi',\chi+\chi_b,\varphi-\phi'-\chi_c)\\
&=&\sum_{\substack{\phi_1\in\f(\phi')\\ \phi_2\in\f(\chi+\chi_b)}}p(\phi_1,\phi_2)
\sum_{\substack{\phi_1'\in\f(\phi'-\phi_1)\\ \phi_2'\in\f(\chi+\chi_b-\phi_2)\\|\phi_1'|=|\phi_2'|}}(|\phi_1'|+1)
\m\left(\phi_1'\right)\m\left(\phi_2'\right)\left(x^+\otimes c\pi(\phi_1')\pi(\phi_2')\right)\\
&\times&D^+(\phi'-\phi_1'-\phi_1,\chi+\chi_b-\phi_2'-\phi_2,\varphi-\phi'-\chi_c)
\end{eqnarray*}
Similarly
\begin{eqnarray*}
&&\hskip-.6in(x^+\otimes cdd')\bbd(\phi'-\chi_d,\chi+\chi_b-\chi_{d'},\varphi-\phi'-\chi_c)\\
&=&\sum_{\substack{\phi_1\in\f(\phi')\\ \phi_2\in\f(\chi+\chi_b)}}p(\phi_1,\phi_2)
\sum_{\substack{\varphi_1\in\f(\phi'-\phi_1)\\ \varphi_2\in\f(\chi+\chi_b-\phi_2)\\|\varphi_1|=|\varphi_2|}}
\frac{\m(\varphi_1)}{|\varphi_1|}\m(\varphi_2)\varphi_1(d)\varphi_2(d')(x^+\otimes c\pi(\varphi_1)\pi(\varphi_2))\\
&\times&D^+(\phi'-\phi_1-\varphi_1,\chi+\chi_b-\phi_2-\varphi_2,\varphi-\phi'-\chi_c)
\end{eqnarray*}
Using these identities \eqref{Dtobbd} becomes
\begin{eqnarray*}
&&\hskip-.6in\sum_{\phi'\in\f(\varphi)}\sum_{\substack{\phi_1\in\f(\phi')\\ \phi_2\in\f(\chi+\chi_b)}}
p(\phi_1,\phi_2)\phi_2(b)D^+(\phi'-\phi_1,\chi+\chi_b-\phi_2,\varphi-\phi')\\
&=&\sum_{\phi'\in\f(\varphi)}\sum_{\substack{\phi_1\in\f(\phi')\\ \phi_2\in\f(\chi+\chi_b)}}p(\phi_1,\phi_2)
(-(\chi(b)+1))\sum_{c\in\supp(\varphi-\phi')}
\sum_{\substack{\phi_1'\in\f(\phi'-\phi_1)\\ \phi_2'\in\f(\chi+\chi_b-\phi_2)\\ |\phi_1'|=|\phi_2'|}}(|\phi_1'|+1)
\m\left(\phi_1'\right)\m\left(\phi_2'\right)\\
&\times&(x^+\otimes c\pi(\phi_1')\pi(\phi_2'))
D^+(\phi'-\phi_1'-\phi_1,\chi+\chi_b-\phi_2'-\phi_2,\varphi-\phi'-\chi_c)\\
&+&\sum_{\phi'\in\f(\varphi)}\sum_{\substack{\phi_1\in\f(\phi')\\ \phi_2\in\f(\chi+\chi_b)}}p(\phi_1,\phi_2)
\sum_{c\in\supp(\varphi-\phi')}\sum_{\substack{d\in\supp\phi'\\ d'\in\supp\chi}}(\chi(b)-\chi_{d'}(b)+1)
\sum_{\substack{\phi_1'\in\f(\phi'-\phi_1)\\ \phi_2'\in\f(\chi+\chi_b-\phi_2)\\ |\phi_1'|=|\phi_2'|}}\\
&\times&\frac{\m\left(\phi_1'\right)}{|\phi_1'|}\m\left(\phi_2'\right)\phi_1'(d)\phi_2'(d')
(x^+\otimes c\pi(\phi_1')\pi(\phi_2'))\\
&\times&D^+(\phi'-\phi_1-\phi_1',\chi+\chi_b-\phi_2-\phi_2',\varphi-\phi'-\chi_c)\\
&+&\sum_{\phi'\in\f(\varphi)}\sum_{\substack{\phi_1\in\f(\phi')\\ \phi_2\in\f(\chi+\chi_b)}}
p(\phi_1,\phi_2)(|\varphi|-|\chi|)(\chi(b)+1)D^+(\phi'-\phi_1,\chi+\chi_b-\phi_2,\varphi-\phi')
\end{eqnarray*}
So it suffices to show that for $\phi',\phi_1,\phi_2\in \f$
with $\phi_1\leq\phi'$, $\phi_2\leq\chi+\chi_b$, and
$|\phi_1|=|\phi_2|$
\begin{eqnarray*}
&&\hskip-.6in\phi_2(b)D^+(\phi'-\phi_1,\chi+\chi_b-\phi_2,\varphi-\phi')\\
&=&-(\chi(b)+1)\sum_{c\in\supp(\varphi-\phi')}
\sum_{\substack{\phi_1'\in\f(\phi'-\phi_1)\\ \phi_2'\in\f(\chi+\chi_b-\phi_2)\\ |\phi_1'|=|\phi_2'|}}(|\phi_1'|+1)
\m\left(\phi_1'\right)\m\left(\phi_2'\right)(x^+\otimes c\pi(\phi_1')\pi(\phi_2'))\\
&\times&D^+(\phi'-\phi_1'-\phi_1,\chi+\chi_b-\phi_2'-\phi_2,\varphi-\phi'-\chi_c)\\
&+&\sum_{c\in\supp(\varphi-\phi')}\sum_{\substack{d\in\supp\phi'\\ d'\in\supp\chi}}(\chi(b)-\chi_{d'}(b)+1)
\sum_{\substack{\phi_1'\in\f(\phi'-\phi_1)\\ \phi_2'\in\f(\chi+\chi_b-\phi_2)\\ |\phi_1'|=|\phi_2'|}}
\frac{\m\left(\phi_1'\right)}{|\phi_1'|}\m\left(\phi_2'\right)\phi_1'(d)\phi_2'(d')\\
&\times&(x^+\otimes c\pi(\phi_1')\pi(\phi_2'))
D^+(\phi'-\phi_1-\phi_1',\chi+\chi_b-\phi_2-\phi_2',\varphi-\phi'-\chi_c)\\
&+&(|\varphi|-|\chi|)(\chi(b)+1)D^+(\phi'-\phi_1,\chi+\chi_b-\phi_2,\varphi-\phi')
\end{eqnarray*}
\begin{eqnarray*}
&=&-\sum_{c\in\supp(\varphi-\phi')}\sum_{\substack{\phi_1'\in\f(\phi'-\phi_1)\\ \phi_2'\in\f(\chi+\chi_b-\phi_2)}}
\phi_2'(b)D^+(\phi_1',\phi_2',\chi_c)\\
&\times&D^+(\phi'-\phi_1-\phi_1',\chi+\chi_b-\phi_2-\phi_2',\varphi-\phi'-\chi_c)\\
&+&(\chi(b)+1)D^+(\phi'-\phi_1,\chi+\chi_b-\phi_2,\varphi-\phi')
\end{eqnarray*}
So the proof is reduced to showing that
\begin{eqnarray*}
&&\hskip-.7in(\chi(b)+1-\phi_2(b))D^+(\phi'-\phi_1,\chi+\chi_b-\phi_2,\varphi-\phi')\\
&=&\sum_{c\in\supp(\varphi-\phi')}\sum_{\substack{\phi_1'\in\f(\phi'-\phi_1)\\ \phi_2'\in\f(\chi+\chi_b-\phi_2)}}
\phi_2'(b)D^+(\phi_1',\phi_2',\chi_c)\\
&\times&D^+(\phi'-\phi_1-\phi_1',\chi+\chi_b-\phi_2-\phi_2',\varphi-\phi'-\chi_c)
\end{eqnarray*}
but this is true by Proposition \ref{idD}$(i)$. We have
completed the proofs of Lemma \ref{basic} and Theorem
\ref{thm}.
\end{proof}



\end{document}